\newtheorem{Theo}{Theorem}
\newtheorem{Lemm}[Theo]{Lemma}
\newtheorem{Coro}[Theo]{Corollary}
\newtheorem{Prop}[Theo]{Proposition}
\newcommand{\be}[1]{\begin{equation}\label{#1}}
\newcommand{\ee}{\end{equation}}
\renewcommand{\(}{\left(}
\renewcommand{\)}{\right)}
\newcommand{\dig}[1]{\int_{\partial\Omega}{#1}\;d{\mathcal H}^{d-1}}
\newcommand{\R}{{\mathbb R}}
\renewcommand{\H}{{\rm H}}
\renewcommand{\L}{{\rm L}}
\newcommand{\nrm}[2]{\|{#1}\|_{\L^{#2}(\Omega)}}
\newcommand{\intg}[1]{\int_{\Omega}#1\,dx}
\newcommand{\nrmrd}[2]{\|{#1}\|_{\L^{#2}(\R^d)}}
\title[Rigidity and optimal constants]{Uniqueness and rigidity in nonlinear elliptic equations, interpolation inequalities, and spectral estimates}
\author{\firstname{Jean} \lastname{Dolbeault}}
\address{Ceremade, UMR CNRS n$^\circ$ 7534,\\ Universit\'e Paris-Dauphine, PSL research university,\\ Pl.~de Lattre de Tassigny, 75775 Paris C\'edex 16, France}
\email{dolbeaul@ceremade.dauphine.fr}
\author{\firstname{Micha\l} \lastname{Kowalczyk}}
\address{Departamento de Ingenier\'{\i}a Matem\'atica and Centro de Modela\-miento Matem\'atico (CNRS UMI n$^\circ$ 2807), Universidad de Chile, Casilla 170 Correo 3,\\ Santiago, Chile}
\email{kowalczy@dim.uchile.cl}
\thanks{J.D.~thanks the LabEx CIMI, the ANR projects STAB, NoNAP and Kibord for support, and the MathAmSud project QUESP. Both authors thank the ECOS (Chile-France) project C11E07. M.K.~thanks Chilean grants Fondecyt 1130126 and Fondo Basal CMM-Chile. Both authors are indebted to an anonymous referee for his careful reading of the paper and useful suggestions.}
\keywords{semilinear elliptic equations; Lin-Ni conjecture; Sobolev inequality; interpolation; Gagliardo-Nirenberg inequalities; Keller-Lieb-Thirring inequality; optimal constants; rigidity results; uniqueness; \emph{carr\'e du champ} method; CD($\rho$,$N$) condition; bifurcation; multiplicity; generalized entropy methods; heat flow; nonlinear diffusion; spectral gap inequality; Poincar\'e inequality; improved inequalities; non-Lipschitz nonlinearity; compact support principle}
\subjclass{35J60; 26D10; 46E35}
\begin{document}\vspace*{-1cm}
\begin{abstract} This paper is devoted to the Lin-Ni conjecture for a semi-linear elliptic equation with a super-linear, sub-critical nonlinearity and homogeneous Neumann boundary conditions. We establish a new rigidity result, that is, we prove that the unique positive solution is a constant if the parameter of the problem is below an explicit bound that we relate with an optimal constant for a Gagliardo-Nirenberg-Sobolev interpolation inequality and also with an optimal Keller-Lieb-Thirring inequality. Our results are valid in a sub-linear regime as well. The rigidity bound is obtained by nonlinear flow methods inspired by recent results on compact manifolds, which unify nonlinear elliptic techniques and the \emph{carr\'e du champ} method in semi-group theory. Our method requires the convexity of the domain. It relies on integral quantities, takes into account spectral estimates and provides improved functional inequalities.
\end{abstract}

\begin{altabstract} Cet article est consacr\'e \`a la conjecture de Lin-Ni pour une \'equa\-tion semi-lin\'eaire elliptique avec non-lin\'earit\'e super-lin\'eaire, sous-critique et des conditions de Neumann homog\`enes. Nous \'etablissons un r\'esultat de rigidit\'e, c'est-\`a-dire nous prouvons que la seule solution positive est constante si le param\`etre du probl\`eme est en-dessous d'une borne explicite, reli\'ee \`a la constante optimale d'une in\'egalit\'e d'interpolation de Gagliardo-Ni\-renberg-Sobo\-lev et aussi \`a une in\'egalit\'e de Keller-Lieb-Thir\-ring optimale. Nos r\'esultats sont \'egalement valides dans un r\'egime sous-lin\'eaire. La borne de rigidit\'e est obtenue par des m\'ethodes de flots non-lin\'eaires inspir\'ees de r\'esultats r\'ecents sur les vari\'et\'es compactes, qui unifient des techniques d'\'equations elliptiques non-lin\'eaires et la m\'ethode du carr\'e du champ en th\'eorie des semi-groupes. Notre m\'ethode requiert la convexit\'e du domaine. Elle repose sur des quantit\'es int\'egrales, prend en compte des estimations spectrales et fournit des in\'egalit\'es am\'elior\'ees. \end{altabstract}

\maketitle

\section{Introduction and main results}\label{Sec:Intro}

Let us assume that $\Omega$ is a bounded domain in $\R^d$ with smooth boundary. To avoid normalization issues, we shall assume throughout this paper that
\[
|\Omega|=1\,.
\]
The unit outgoing normal vector at the boundary is denoted by $n$ and $\partial_nu=\nabla u\cdot n$. We shall denote by $2^*=\frac{2\,d}{d-2}$ the critical exponent if $d\ge3$ and let $2^*=\infty$ if $d=1$ or $2$. Assume first that $p$ is in the range $1<p<2^*-1=(d+2)/(d-2)$ if $d\ge3$, $1<p<\infty$ if $d=1$ or $2$, and let us consider the three following problems.
\begin{enumerate}
\item[(P1)] For which values of $\lambda>0$ does the equation
\be{Eq1}
-\,\Delta u+\lambda\,u=u^p\quad\mbox{in}\quad\Omega\,,\quad\partial_nu=0\quad\mbox{on}\quad\partial\Omega
\ee
has a unique positive solution~?
\item[(P2)] For any $\lambda>0$, let us define
\[
\mu(\lambda):=\inf_{u\in\H^1(\Omega)\setminus\{0\}}\frac{\nrm{\nabla u}2^2+\lambda\,\nrm u2^2}{\nrm u{p+1}^2}\,.
\]
For which values of $\lambda>0$ do we have $\mu(\lambda)=\lambda$~?
\item[(P3)] Assume that $\phi$ is nonnegative function in $\L^q(\Omega)$ with $q=\frac{p+1}{p-1}$ and denote by $\lambda_1(\Omega,-\phi)$ the lowest eigenvalue of the Schr\"odinger operator $-\Delta-\phi$. Let us consider the optimal inequality
\[
\lambda_1(\Omega,-\phi)\ge-\,\nu\big(\nrm\phi q\big)\quad\forall\,\phi\in\L_+^q(\Omega)
\]
For which values of $\mu$ do we know that $\nu(\mu)=\mu$~? 
\end{enumerate}
The three problems are related. Uniqueness in (P1) means that $u=\lambda^{1/(p-1)}$ while equality cases $\mu(\lambda)=\lambda$ in (P2) and $\nu(\mu)=\mu$ and (P3) are achieved by constant functions and constant potentials respectively. We define a threshold value $\mu_i$ with $i=1$, $2$, $3$ such that the answer to (P$i$) is yes if $\mu<\mu_i$ and no if $\mu>\mu_i$.

\medskip Our method is not limited to the case $p>1$. If $p$ is in the range $0<p<1$, the three problems can be reformulated as follows.
\begin{enumerate}
\item[(P1)] For which values of $\lambda>0$ does the equation
\be{Eq2}
-\,\Delta u+u^p=\lambda\,u\quad\mbox{in}\quad\Omega\,,\quad\partial_nu=0\quad\mbox{on}\quad\partial\Omega
\ee
has a unique nonnegative solution~?
\item[(P2)] For any $\mu>0$, let us define
\[
\lambda(\mu):=\inf_{u\in\H^1(\Omega)\setminus\{0\}}\frac{\nrm{\nabla u}2^2+\mu\,\nrm u{p+1}^2}{\nrm u2^2}\,.
\]
For which values of $\mu>0$ do we have $\lambda(\mu)=\mu$~?
\item[(P3)] Assume that $\phi$ is nonnegative function in $\L^q(\Omega)$ with $q=\frac{1+p}{1-p}$ and still denote by $\lambda_1(\Omega,\phi)$ the lowest eigenvalue of the Schr\"odinger operator $-\Delta+\phi$. Let us consider the optimal inequality
\[
\lambda_1(\Omega,\phi)\ge\nu(\nrm{\phi^{-1}}q)\quad\forall\,\phi\in\L_+^q(\Omega)
\]
For which values of $\mu$ do we know that $\nu(\mu)=\mu$~? 
\end{enumerate}

\medskip The problems of the range $0<p<1$ and $1<p<2^*$ can be unified. Let us define
\[
\varepsilon(p)=\frac{p-1}{|p-1|}
\]
and observe that $\nrm u{p+1}\le\nrm u2$ if $p<1$, $\nrm u2\le\nrm u{p+1}$ if $p>1$, so that $\varepsilon(p)\,\big(\nrm u{p+1}-\nrm u2\big)$ is nonnegative. Our three problems can be reformulated as follows.
\begin{enumerate}
\item[(P1)] Let us consider the equation
\be{Eq3}
-\,\varepsilon(p)\,\Delta u+\lambda\,u-u^p=0\quad\mbox{in}\quad\Omega\,,\quad\partial_nu=0\quad\mbox{on}\quad\partial\Omega
\ee
and define
\[
\mu_1:=\inf\{\lambda>0\,:\,\mbox{\eqref{Eq3} has a non constant positive solution}\}\,.
\]
We shall say that \emph{rigidity} holds in~\eqref{Eq3} if $u=\lambda^{1/(p-1)}$ is its unique positive solution. 
\item[(P2)] For any $\mu>0$, take $\lambda(\mu)$ as the best (i.e. the smallest if $\varepsilon(p)>0$ and the largest if $\varepsilon(p)<0$) constant in the inequality
\be{P2}
\nrm{\nabla u}2^2\ge\varepsilon(p)\,\Big[\mu\,\nrm u{p+1}^2-\lambda(\mu)\,\nrm u2^2\Big]\quad\forall\,u\in\H^1(\Omega)\,.
\ee
Here we denote by $\mu\mapsto\lambda(\mu)$ the inverse function of $\lambda\mapsto\mu(\lambda)$. Let
\[
\mu_2:=\inf\{\lambda>0\,:\,\mu(\lambda)\neq\lambda\mbox{ in~\eqref{P2}}\}\,.
\]
\item[(P3)] Let us consider the optimal \emph{Keller-Lieb-Thirring inequality}
\be{P3}
\nu(\mu)=-\,\varepsilon(p)\,\inf_{\phi\in\mathcal A_\mu}\lambda_1(\Omega,-\,\varepsilon(p)\,\phi)
\ee
where the admissible set for the potential $\phi$ is defined by
\[
\mathcal A_\mu:=\left\{\phi\in\L_+^q(\Omega)\,:\,\nrm{\phi^{\varepsilon(p)}}q=\mu\right\}
\]
and $q=(p+1)/|p-1|$. Let
\[
\mu_3:=\inf\{\mu>0\,:\,\nu(\mu)\neq\mu\mbox{ in~\eqref{P3}}\}\,.
\]
\end{enumerate}
Finally let us define $\Lambda_\star$ as the \emph{best constant in the interpolation inequality}
\be{GenInterpIneq}
\nrm{\nabla u}2^2\ge\frac{\Lambda_\star}{p-1}\,\Big[\nrm u{p+1}^2-\nrm u2^2\Big]\quad\forall\,u\in\H^1(\Omega)\,.
\ee
Let us observe that $\Lambda_\star$ may depend on $p$.
\begin{Theo}\label{Thm:Main1} Assume that $d\ge2$, $p\in(0,1)\cup(1,2^*-1)$ and $\Omega$ is a bounded domain in $\R^d$ with smooth boundary such that $|\Omega|=1$. With the above notations, we have
\[\label{Ineq:Main1}
0<\mu_1\le\mu_2=\mu_3=\frac{\Lambda_\star}{|p-1|}
\]
and, with $\lambda(\mu)$ and $\nu(\mu)$ defined as in~\eqref{P2} and~\eqref{P3}, the following properties hold:
\begin{enumerate}
\item[(P1)] Rigidity holds in~\eqref{Eq3} for any $\lambda\in(0,\mu_1)$.
\item[(P2)] The function $\mu\mapsto\lambda(\mu)$ is monotone increasing, concave if $p\in(0,1)$, convex if $p\in(1,2^*-1)$ and $\lambda(\mu)=\mu$ if and only if $\mu\le\mu_2$.
\item[(P3)] For any $\mu>0$, $\nu(\mu)=\lambda(\mu)$.
\end{enumerate}
\end{Theo}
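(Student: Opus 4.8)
The plan is to prove the chain $0<\mu_1\le\mu_2=\mu_3=\Lambda_\star/|p-1|$ together with (P1)--(P3) in four steps, of which only the strict positivity $\mu_1>0$ requires the new flow argument. \emph{Step 1: $\mu_2=\Lambda_\star/|p-1|$ and the analytic part of (P2).} Testing the Rayleigh quotient defining $\mu(\lambda)$ (for $p>1$) or $\lambda(\mu)$ (for $0<p<1$) on constant functions — legitimate since $|\Omega|=1$ forces $\nrm u2=\nrm u{p+1}$ there — shows that $\mu(\lambda)=\lambda$ (resp.\ $\lambda(\mu)=\mu$) holds \emph{precisely} when the affine-form inequality $\nrm{\nabla u}2^2\ge\varepsilon(p)\,\lambda\,\big(\nrm u{p+1}^2-\nrm u2^2\big)$ is valid for all $u\in\H^1(\Omega)$, which by the very definition of $\Lambda_\star$ in \eqref{GenInterpIneq} amounts to $\lambda\,|p-1|\le\Lambda_\star$; hence $\mu_2=\Lambda_\star/|p-1|$ and the ``$\lambda(\mu)=\mu\iff\mu\le\mu_2$'' assertion. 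The monotonicity and the convexity/concavity are structural: $\lambda\mapsto\mu(\lambda)$ is an infimum of functions affine and nondecreasing in $\lambda$, hence concave and nondecreasing, so for $p>1$ its inverse $\mu\mapsto\lambda(\mu)$ is convex and nondecreasing; for $0<p<1$, $\lambda(\mu)$ is itself such an infimum in $\mu$, hence concave and nondecreasing. Finally $0<\Lambda_\star<\infty$ because \eqref{GenInterpIneq} is a genuine subcritical interpolation inequality (Sobolev embedding plus a Poincar\'e-type control of the deviation from the mean), so $0<\mu_2<\infty$.

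\emph{Step 2: (P3), whence $\mu_3=\mu_2$, and then $\mu_1\le\mu_2$.} Property (P3) is a Legendre--H\"older duality: writing $-\lambda_1(\Omega,-\varepsilon(p)\phi)$ as a supremum of Rayleigh quotients and exchanging the order of the two optimizations, the potential term $\int_\Omega\phi\,u^2$ is extremized over the constraint defining $\mathcal A_\mu$ by H\"older's inequality (in direct form for $p>1$, in reverse form for $0<p<1$); the extremal potential is proportional to $u^{p-1}$ and the term collapses to $\mu\,\nrm u{p+1}^2$, leaving exactly the variational quantity that characterizes $\lambda(\mu)$ through \eqref{P2}. Thus $\nu(\mu)=\lambda(\mu)$ for every $\mu>0$ and $\mu_3=\mu_2$. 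For the inequality $\mu_1\le\mu_2$: when $\lambda>\mu_2$ the affine-form inequality of Step~1 fails, so the relevant Rayleigh quotient takes a value strictly below $\lambda$; by the compact subcritical embedding $\H^1(\Omega)\hookrightarrow\L^{p+1}(\Omega)$ that value is attained by some $u$, which may be taken nonnegative (replace $u$ by $|u|$), is positive by the strong maximum principle, and is non-constant (the constant value would be $\lambda$). A scalar rescaling turns $u$ into a solution of \eqref{Eq3}: for $p>1$ at the parameter $\lambda$ itself, so non-constant positive solutions exist for every $\lambda>\mu_2$; for $0<p<1$ at the parameter $\lambda(\mu)<\mu$, and since $\mu\mapsto\lambda(\mu)$ is increasing with $\lambda(\mu_2)=\mu_2$, such solutions exist for parameters filling a right neighbourhood of $\mu_2$. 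Either way $\mu_1\le\mu_2$. Statement (P1) is then immediate from the definition of $\mu_1$ as an infimum.

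\emph{Step 3 (the crux): $\mu_1>0$.} This is a rigidity statement: for $\lambda$ below an explicit positive threshold $\lambda_\star=\lambda_\star(d,p)$ the only positive solution of \eqref{Eq3} is the constant $\lambda^{1/(p-1)}$. I would use the \emph{carr\'e du champ}/nonlinear-flow method. Given a positive solution $u$, set $u=v^{\beta}$ with $\beta$ to be optimized, multiply the resulting weighted equation by suitable nonlinear functions of $v$, and integrate by parts; the decisive ingredients are Bochner's identity $\tfrac12\Delta|\nabla v|^2=|\nabla^2 v|^2+\nabla v\cdot\nabla\Delta v$ on $\R^d$, the pointwise bound $|\nabla^2 v|^2\ge\tfrac1d(\Delta v)^2$, and the fact that the boundary contributions produced by these integrations by parts reduce, thanks to $\partial_n u=0$ and the \emph{convexity of $\Omega$}, to an integral of the second fundamental form evaluated on the tangential gradient, which has the favourable sign. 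One is led to an identity of the schematic form
\[
\int_\Omega\Big|\nabla^2 v-\tfrac1d\,(\Delta v)\,\mathrm{Id}\Big|^2 w\,dx\;+\;(\text{nonnegative curvature--dimension remainder})\;+\;(\lambda_\star-\lambda)\int_\Omega(\text{nonnegative})\,dx\;=\;0 ,
\]
valid for every positive solution, with $\lambda_\star>0$ explicit and in the admissible range of $\beta$; for $\lambda<\lambda_\star$ each term must vanish, which forces $\nabla^2 v$ to be proportional to the identity and then, through the Neumann condition, $v$ (hence $u$) to be constant. Therefore $\mu_1\ge\lambda_\star>0$.

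I expect the main obstacles to lie entirely in this last step: (i) choosing $\beta$ (equivalently the diffusion exponent in the flow formulation) so that \emph{every} cross term recombines into a manifestly nonnegative quantity over the whole admissible range — this is where the hypothesis $d\ge2$ and the precise value of $\lambda_\star$ enter; (ii) justifying the integrations by parts and Bochner's identity up to the boundary under the regularity actually available, in particular for $0<p<1$ where $u^p$ is non-Lipschitz and solutions may develop dead cores (compact support principle), so the set $\{u=0\}$ and its neighbourhood must be handled separately; and (iii) the sign bookkeeping of the boundary terms, which must be organized so that the convexity of $\Omega$ is invoked exactly once and with the correct orientation.
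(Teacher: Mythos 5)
Your Steps 1 and 2 are correct and essentially coincide with the paper's proof of Theorem~\ref{Thm:Main1} in Section~\ref{Sec:Equiv}: the equivalence ``$\lambda(\mu)=\mu$ iff \eqref{GenInterpIneq} holds with $\Lambda=\mu\,|p-1|$'' gives $\mu_2=\Lambda_\star/|p-1|$ even a bit more directly than Lemmas~\ref{Lem:1}--\ref{Lem:2}, and your H\"older/extremal-potential duality for $\nu(\mu)=\lambda(\mu)$ is exactly the two-sided computation of Lemma~\ref{Lem:3} (note that the ``exchange of the two optimizations'' is only legitimate because you have both one-sided bounds: H\"older in one direction, the explicit potential $\phi\propto u^{p-1}$ in the other). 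Your proof of $\mu_1\le\mu_2$ runs in the direction opposite to Lemma~\ref{Lem:4} (you produce non-constant solutions for parameters above $\mu_2$ rather than forcing optimizers to be constant below $\mu_1$), which is fine for $p>1$; for $p\in(0,1)$ the strong maximum principle does not apply to the non-Lipschitz nonlinearity, so the positivity of the rescaled minimizer (needed because $\mu_1$ counts \emph{positive} solutions) requires a word. Two smaller omissions: you only justify that $\lambda(\mu)$ is nondecreasing, whereas the theorem claims strict monotonicity, and for $p>1$ the existence of the inverse function $\mu\mapsto\lambda(\mu)$ itself requires it; the paper obtains this from concavity combined with the $\lambda\to\infty$ asymptotics of $\mu(\lambda)$ derived by rescaling and concentration-compactness.

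The genuine gap is Step 3. Theorem~\ref{Thm:Main1} assumes only that $\Omega$ is a bounded smooth domain, with no convexity, while your argument for $\mu_1>0$ rests on the favourable sign of the boundary terms produced by the integrations by parts, i.e.\ on Lemma~\ref{Lem:Convex}, which is precisely where convexity enters; that carr\'e du champ computation is the content of Theorem~\ref{Thm:Main2} and Lemma~\ref{Lem:Lower}, not of Theorem~\ref{Thm:Main1}. As proposed, your Step 3 proves $0<\mu_1$ only for convex domains. For a general smooth bounded domain the positivity has to come from elsewhere: for $p>1$ it is the Lin--Ni--Takagi result \cite{MR929196} (Moser iteration giving a uniform bound of order $\lambda^{1/(p-1)}$, then the Poincar\'e inequality), which the paper invokes; for $p\in(0,1)$ an elementary convexity-free argument suffices, namely testing $-\Delta u+u^p=\lambda\,u$ with $u-\bar u$, where $\bar u:=\intg u$, and using that $t\mapsto t^p$ is nondecreasing, which yields $\lambda_2\intg{|u-\bar u|^2}\le\nrm{\nabla u}2^2\le\lambda\intg{|u-\bar u|^2}$ and hence that every nonnegative solution of \eqref{Eq2} is constant when $\lambda<\lambda_2$. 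Either incorporate such convexity-free arguments for $\mu_1>0$, or add the convexity hypothesis and accept that Step 3 then proves the corresponding part of Theorem~\ref{Thm:Main2} rather than Theorem~\ref{Thm:Main1}.
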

This result is inspired from a series of recent papers on interpolation inequalities, rigidity results and Keller-Lieb-Thirring estimates on compact manifolds. Concerning Keller-Lieb-Thirring inequalities, we refer to~\cite{DolEsLa-APDE2014,Dolbeault2013437,MR3377678}, and to the initial paper~\cite{MR0121101} by J.B.~Keller whose results were later rediscovered by E.H.~Lieb and W.~Thirring in~\cite{Lieb-Thirring76}. For interpolation inequalities on compact manifolds, we refer to \cite{Dolbeault20141338,DEKL,dolbeault:hal-01206975} and references therein. In our case, the absence of curvature and the presence of a boundary induce a number of changes compared to these papers, that we shall study next. Beyond the properties of Theorem~\ref{Thm:Main1} which are not very difficult to prove, our main goal is to get explicit estimates of $\mu_i$ and $\Lambda_\star$.

Let us define
\[
\lambda_2:=\lambda_2(\Omega,0)
\]
which is the second (and first positive) eigenvalue of $-\,\Delta$ on $\Omega$, with homogeneous Neumann boundary conditions. Recall that the lowest eigenvalue of $-\,\Delta$ is $\lambda_1=0$ and that the corresponding eigenspace is spanned by the constants. For this reason $\lambda_2$ is often called the \emph{spectral gap} and the Poincar\'e inequality sometimes appears in the literature as the \emph{spectral gap inequality}. Finally let us introduce the number
\be{thetastar}
\theta_\star(p,d)=\frac{(d-1)^2\,p}{d\,(d+2)+p}\,.
\ee
\begin{Theo}\label{Thm:Main2} Assume that $d\ge1$ and $\Omega$ is a bounded domain in $\R^d$ with smooth boundary such that $|\Omega|=1$. With the above notations, we get the following estimates:
\[
\frac{1-\theta_\star(p,d)}{|p-1|}\,\lambda_2\le\mu_1\le\mu_2=\mu_3=\frac{\Lambda_\star}{|p-1|}\le\frac{\lambda_2}{|p-1|}
\]
for any $p\in(0,1)\cup(1,2^*\!-\!1)$. The lower estimate holds only under the additional assumptions that $\Omega$ is convex and $d\ge2$.\end{Theo}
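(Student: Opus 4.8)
The equalities $\mu_2=\mu_3=\Lambda_\star/|p-1|$ and the inequality $\mu_1\le\mu_2$ are already contained in Theorem~\ref{Thm:Main1}, so only the two outermost estimates of the chain have to be proved. The upper bound $\Lambda_\star\le\lambda_2$ is obtained by linearization around the constants: test~\eqref{GenInterpIneq} with $u=1+\varepsilon\,w$, where $w$ is an eigenfunction of $-\Delta$ on $\Omega$ with homogeneous Neumann boundary conditions and eigenvalue $\lambda_2$, normalized by $\intg w=0$ and $\nrm w2=1$. Since $|\Omega|=1$, one has $\nrm{1+\varepsilon\,w}r^2=1+(r-1)\,\varepsilon^2+o(\varepsilon^2)$ as $\varepsilon\to0$ for every $r>1$, hence $\nrm u{p+1}^2-\nrm u2^2=(p-1)\,\varepsilon^2+o(\varepsilon^2)$ while $\nrm{\nabla u}2^2=\lambda_2\,\varepsilon^2$; dividing~\eqref{GenInterpIneq} by $\varepsilon^2$ and letting $\varepsilon\to0$ yields $\lambda_2\ge\Lambda_\star$.

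For the lower estimate, assume $d\ge2$ and $\Omega$ convex, and let $u$ be a positive solution of~\eqref{Eq3}. The plan is to run the \emph{carré du champ} computation — equivalently, the generalized-entropy method along a nonlinear diffusion flow — adapted to a bounded convex domain. First, perform a change of unknown $u=v^{\beta}$ with an exponent $\beta=\beta(p,d)$ to be optimized; since $\partial_nu=0$, also $\partial_nv=0$, so along $\partial\Omega$ the vector $\nabla v$ is tangent to the boundary. Next, multiply the equation satisfied by $v$ by a suitable linear combination of $\Delta v$ and $|\nabla v|^2/v$, integrate over $\Omega$, and integrate by parts twice; by Bochner's formula this produces the full Hessian term $\intg{|\mathrm D^2v|^2}$, a quartic gradient contribution, the $\lambda$-dependent terms, and one boundary integral which, using $\partial_nv=0$, equals $-2\int_{\partial\Omega}\mathrm{II}(\nabla v,\nabla v)\,d\mathcal H^{d-1}$, with $\mathrm{II}$ the second fundamental form of $\partial\Omega$. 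Convexity of $\Omega$ gives $\mathrm{II}\ge0$, so this boundary term has the sign needed to be discarded: this is exactly where the convexity of $\Omega$ and the restriction $d\ge2$ are used.

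It then remains to treat the interior terms. Bounding the Hessian from below by its trace-free part, $|\mathrm D^2v|^2\ge\tfrac1d\,(\Delta v)^2+\big\|\mathrm D^2v-\tfrac1d(\Delta v)\,\mathrm{Id}\big\|^2$ while keeping the remainder, using the equation once more, and invoking the Poincaré (spectral gap) inequality $\nrm{\nabla g}2^2\ge\lambda_2\,\nrm g2^2$ on the mean-zero part of the relevant power of $u$, one should arrive at an identity of the form
\[
\Big(\lambda-\tfrac{1-\theta_\star(p,d)}{|p-1|}\,\lambda_2\Big)\,\mathcal R[u]=\mathcal S[u]\,,
\]
where $\mathcal S[u]\ge0$ is a sum of squares (the trace-free Hessian remainder together with a term measuring how far $\nabla v$ is from a gradient-flow profile) and $\mathcal R[u]\ge0$, with $\mathcal R[u]=0$ only when $u$ is constant. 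The free exponent $\beta$ is then fixed so as to make the coefficient of the quartic gradient term as favourable as possible, and the optimal choice is precisely the one that yields $\theta_\star(p,d)=\frac{(d-1)^2p}{d(d+2)+p}$. Consequently, if $|p-1|\,\lambda<\big(1-\theta_\star(p,d)\big)\,\lambda_2$, the left-hand side is $\le0$ and the right-hand side is $\ge0$, so both vanish; hence $u$ is constant, which proves $\mu_1\ge\frac{1-\theta_\star(p,d)}{|p-1|}\,\lambda_2$ and closes the chain.

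The technical core is the bookkeeping of the two previous paragraphs: the exact algebra of the quartic gradient terms after the double integration by parts, the identification of $\mathcal S[u]$ as a genuine sum of squares, and the interplay between the Hessian remainder and the spectral gap inequality that pins down $\theta_\star$ — the sharp value of $\beta$ being obtained by optimizing a quadratic in the free parameters. I expect the handling of the boundary term through convexity to be the conceptually decisive new ingredient compared with the compact-manifold setting, where there is no boundary and the Ricci curvature plays the role here taken by $\lambda_2$. One should also check that all the integrations by parts are licit: for $p>1$ this follows from elliptic regularity and the strong maximum principle, whereas for $0<p<1$ the nonlinearity $u^p$ is merely Hölder continuous, so one first regularizes the equation and controls the positivity and regularity of $u$ (and its boundary behaviour, via the compact support principle) before passing to the limit.
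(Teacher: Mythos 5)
Your proposal follows essentially the same route as the paper: the middle of the chain is quoted from Theorem~\ref{Thm:Main1}, the upper bound is the same linearization of~\eqref{GenInterpIneq} around constants as in Lemma~\ref{Lem:Upper}, and the lower bound is the paper's own nonlinear \emph{carr\'e du champ} argument of Lemma~\ref{Lem:Lower}: change of unknown $u=v^\beta$, multiplication of the equation by $\Delta v+\kappa\,|\nabla v|^2/v$, control of the boundary term through the second fundamental form under convexity (Lemma~\ref{Lem:Convex}), trace-free Hessian decomposition plus the spectral estimate $\int_\Omega(\Delta v)^2\,dx\ge\lambda_2\int_\Omega|\nabla v|^2\,dx$ (Lemma~\ref{Lem:LinInterp}), with $\beta$ optimized so as to produce $\theta_\star(p,d)$. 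The algebra you defer as ``bookkeeping'' is exactly what the paper records (the choice $\beta=(d+2)/(d+2-p)$ and the sum-of-squares identity involving $\mathrm Q[v]$), so your outline is correct and coincides with the published proof.
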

Before giving a brief overview of the literature related to our results, let us emphasize two points. We first notice that $\lim_{p\to(d+2)/(d-2)}\theta_\star(p,d)=1$ if $d\ge3$ so that the lower estimate goes to $0$ as the exponent $p$ approaches the critical exponent. This is consistent with the previously known results on rigidity, that are based on Morrey's scheme and deteriorate as $p$ approaches $(d+2)/(d-2)$. In the critical case, multiplicity may hold for any value of~$\lambda$, so that one cannot expect that rigidity could hold without an additional assumption. The second remark is the fact that the convexity of~$\Omega$ is essential for known results in the critical case and one should not be surprised to see this condition also in the sub-critical range. This assumption is however not required in the result of C.-S.~Lin, W.-M.~Ni, and I.~Takagi in~\cite{MR929196}. Compared to their paper, what we gain here when $p>1$ is a fully explicit estimate which relies on a simple computation. The case $p<1$ has apparently not been studied yet.

It is remarkable that the case $p=1$ is the endpoint of the two admissible intervals in $p$. We may notice that
\[
\theta_\star(1,d)=\frac{(d-1)^2}{(d+1)^2}
\]
is in the interval $(0,1)$ for any $d\ge2$. The case $p=1$ is a limit case, which corresponds to the logarithmic Sobolev inequality
\be{LogSob}
\nrm{\nabla u}2^2-\frac{\Lambda_\star}2\intg{|u|^2\,\log\(\frac{|u|^2}{\nrm u2^2}\)}\ge0\quad\forall\,u\in\H^1(\Omega)
\ee
where $\Lambda_\star$ denotes the optimal constant, and by passing to the limit as $p\to1$ in~\eqref{GenInterpIneq}, we have the following result.
\begin{Coro}\label{Cor:Main3} If $d\ge2$ and $\Omega$ is a bounded domain in $\R^d$ with smooth boundary such that $|\Omega|=1$, then
\[
\frac{4\,d}{(d+1)^2}\,\lambda_2\le\Lambda_\star\le\lambda_2\,.
\]\end{Coro}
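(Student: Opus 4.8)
The plan is to derive Corollary~\ref{Cor:Main3} from Theorem~\ref{Thm:Main2} by letting $p\to1$, so the first step is to identify the logarithmic Sobolev inequality~\eqref{LogSob} as the limit of the interpolation inequality~\eqref{GenInterpIneq}. For a fixed $u\in\H^1(\Omega)\cap\L^\infty(\Omega)$ with $u\not\equiv0$, one writes $\nrm u{p+1}^2=\exp\(\frac2{p+1}\,\log\intg{|u|^{p+1}}\)$ (using $|\Omega|=1$) and differentiates at $p=1$ to obtain
\[
\lim_{p\to1}\frac1{p-1}\(\nrm u{p+1}^2-\nrm u2^2\)=\frac12\intg{|u|^2\,\log\(\frac{|u|^2}{\nrm u2^2}\)}\,,
\]
so that~\eqref{GenInterpIneq}, after passing to the limit $p\to1$, formally becomes~\eqref{LogSob}; the optimal constant $\Lambda_\star$ in~\eqref{LogSob} is then the quantity to be estimated.

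For the lower bound, I would not pass to the limit in~\eqref{GenInterpIneq} at the optimal constant, but rather in the weaker but fully explicit inequality provided by Theorem~\ref{Thm:Main2}: since $\Lambda_\star\ge(1-\theta_\star(p,d))\,\lambda_2$ for every $p\in(1,2^*-1)$, and since $\nrm u{p+1}\ge\nrm u2$ when $p>1$ (Jensen's inequality on the probability space $\Omega$), we have
\[
\nrm{\nabla u}2^2\ge\frac{(1-\theta_\star(p,d))\,\lambda_2}{p-1}\(\nrm u{p+1}^2-\nrm u2^2\)\quad\forall\,u\in\H^1(\Omega)\,.
\]
Letting $p\to1^+$ and using the first step together with
\[
\theta_\star(1,d)=\frac{(d-1)^2}{(d+1)^2}\,,\qquad 1-\theta_\star(1,d)=\frac{4\,d}{(d+1)^2}\,,
\]
one obtains~\eqref{LogSob} with constant $\frac{4\,d}{(d+1)^2}\,\lambda_2$, first for $u\in\H^1(\Omega)\cap\L^\infty(\Omega)$ and then, by truncation and density, for all $u\in\H^1(\Omega)$. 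Hence $\Lambda_\star\ge\frac{4\,d}{(d+1)^2}\,\lambda_2$.

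For the upper bound, I would test~\eqref{LogSob} with $u_\varepsilon=1+\varepsilon\,\varphi_2$, where $\varphi_2$ is an eigenfunction of $-\Delta$ with homogeneous Neumann boundary conditions for the eigenvalue $\lambda_2$, so that $\intg{\varphi_2}=0$ because $\varphi_2$ is $\L^2$-orthogonal to the constants. A Taylor expansion in $\varepsilon$ gives $\nrm{\nabla u_\varepsilon}2^2=\varepsilon^2\,\lambda_2\,\nrm{\varphi_2}2^2$ exactly and $\intg{|u_\varepsilon|^2\,\log(|u_\varepsilon|^2/\nrm{u_\varepsilon}2^2)}=2\,\varepsilon^2\,\nrm{\varphi_2}2^2+o(\varepsilon^2)$, so~\eqref{LogSob} forces $(\lambda_2-\Lambda_\star)\,\varepsilon^2\,\nrm{\varphi_2}2^2+o(\varepsilon^2)\ge0$ and hence $\Lambda_\star\le\lambda_2$ after dividing by $\varepsilon^2$ and letting $\varepsilon\to0$. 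This is the same test-function computation that yields the estimate $\Lambda_\star\le\lambda_2$ in Theorem~\ref{Thm:Main2}, and it can alternatively be obtained by passing to the limit in that bound.

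The only genuinely delicate point is the limit interchange in the lower-bound step: one must check that the right-hand side converges for a fixed bounded test function — which is precisely the elementary differentiation above, and which in fact extends to all of $\H^1(\Omega)$ by dominated convergence once one uses the Sobolev embedding $\H^1(\Omega)\hookrightarrow\L^{2+\delta}(\Omega)$ (valid for $d\ge2$) to dominate the difference quotients. All the substantive content, in particular the appearance of the spectral gap $\lambda_2$ and of $\theta_\star$, is already carried by Theorem~\ref{Thm:Main2}, so this corollary is essentially a limiting formality.
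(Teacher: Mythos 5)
Your proof is correct and follows essentially the same route as the paper: Corollary~\ref{Cor:Main3} is obtained there precisely by letting $p\to1$ in the two-sided bound of Theorem~\ref{Thm:Main2}, with the differentiation of $p\mapsto\nrm u{p+1}^2$ at $p=1$ producing the entropy term in~\eqref{LogSob} and the test function $1+\epsilon\,u_2$ giving the upper bound exactly as in Lemma~\ref{Lem:Upper}. The only caveat is that the lower bound you import from Theorem~\ref{Thm:Main2} is proved there under the additional hypothesis that $\Omega$ is convex, so your argument (like the paper's) establishes the lower estimate of the corollary under that same assumption.
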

It is also possible to define a family of logarithmic Sobolev inequalities depending on $\lambda$, or to get a parametrized Keller-Lieb-Thirring inequality and find that \hbox{$\lambda=\Lambda_\star$} corresponds to a threshold value between a linear dependence of the optimal constant in $\lambda$ and a regime in which this dependence is given by a strictly convex function of $\lambda$. The interested reader is invited to refer to~\cite[Corollaries 13 and 14]{DolEsLa-APDE2014} for similar results on the sphere.

\medskip The existence of a non-trivial solution to~\eqref{Eq1} bifurcating from the constant ones for $\lambda=\lambda_2/(p-1)$ has been established for instance in~\cite{MR974610} when $p>1$. This paper also contains the conjecture, known in the literature as the \emph{Lin-Ni conjecture} and formulated in~\cite{MR974610}, that there are no nontrivial solutions for $\lambda>0$ small enough and that there are non-trivial solutions for $\lambda$ large enough, even in the super-critical case $p>2^*-1$. More details can be found in~\cite{MR929196}. Partial results were obtained before in~\cite{MR849484}, when the exponent is in the range $1<p<d/(d-2)$. These papers were originally motivated by the connection with the model of Keller and Segel in chemotaxis and the Gierer-Meinhardt system in pattern formation: see~\cite{MR1490535} for more explanations.

For completeness, let us briefly review what is known in the critical case $p=2^*-1$. When $d=3$, it was proved by M.~Zhu in~\cite{MR1691074} that rigidity holds true for $\lambda>0$ small enough when one considers the positive solutions to the nonlinear elliptic equation
\[
\Delta u-\lambda\,u+f(u)=0
\]
on a smooth bounded domain of $\R^3$ with homogeneous Neumann boundary conditions, if $f(u)$ is equal to $u^5$ up to a perturbation of lower order. Another proof was given by J.~Wei and X.~Xu in~\cite{MR2194150} and slightly extended later in~\cite{MR2410880}. The Lin-Ni conjecture is wrong for $p=2^*-1$ in higher dimensions: see~\cite{MR2806101}, and also earlier references therein. Some of the results have been extended to the $d$-Laplacian in dimension $d$ in~\cite{MR2446347}.

Compared to the case of homogeneous Dirichlet boundary conditions or in the whole space, much less is known concerning bifurcations, qualitative aspects of the branches of solutions and multiplicity in the case of homogeneous Neumann boundary conditions. We may for instance refer to~\cite{MR607786,MR1090827,MR2484935,MR1897394,Miyamoto20101853} for some results in this direction, but only in rather simple cases (balls, intervals or rectangles).

Concerning the Lin-Ni conjecture, it is known from~\cite[Theorem~3,~(ii)]{MR929196} that $u\equiv\lambda^{1/(p-1)}$ if $\lambda$ is small enough (also see~\cite{MR849484} for an earlier partial result), and that there is a non-trivial solution if $\lambda$ is large enough. As already said above, the method is based on the Moser iteration technique, in order to get a uniform estimate on the solution, and then on a direct estimate based on the Poincar\'e inequality. In the proof of Theorem~\ref{Thm:Main2} we shall adopt a completely different strategy, which is inspired by the \emph{rigidity} results for nonlinear elliptic PDEs as in~\cite{MR615628,MR1134481} on the one hand, and by the \emph{carr\'e du champ} method of D.~Bakry and M.~Emery on the other hand, that can be traced back to~\cite{MR772092}. More precisely, we shall rely on improved versions of these methods as in~\cite{MR1412446,MR1631581}, which involve the eigenvalues of the Laplacian, results on interpolation inequalities on compact manifolds obtained by J.~Demange in~\cite{MR2381156}, and a recent improvement with a computation based on traceless Hessians in~\cite{Dolbeault20141338}.

{F}rom a larger perspective, our approach in Theorem~\ref{Thm:Main2} is based on results for compact Riemannian manifolds that can be found in various papers: the most important ones are the rigidity results of L.~V\'eron et al.~in~\cite{MR1134481,MR1338283}, the computations inspired by the \emph{carr\'e du champ} method of~\cite{MR1412446,MR2381156}, and the nonlinear flow approach of~\cite{Dolbeault20141338} (also see~\cite{DEKL,DEKL2012,Dolbeault06082014}). Using these estimates in the range $1<p<2^*-1$ and the Bakry-Emery method as in~\cite{pre05312043} in the case $p\in(0,1)$, our goal is to prove that rigidity holds in a certain range of~$\lambda$ without relying on uniform estimates (and the Moser iteration technique) and discuss the estimates of the threshold values. The spectral estimates of Theorem~\ref{Thm:Main1} are directly inspired by~\cite{DolEsLa-APDE2014,Dolbeault2013437}.

\medskip This paper is organized as follows. Preliminary results have been collected in Section~\ref{Sec:Prelim}. The proof of Theorem~\ref{Thm:Main1} is given in Section~\ref{Sec:Equiv}. In Section~\ref{Sec:Linear}, we use the heat flow to establish a first lower bound similar to the one of Theorem~\ref{Thm:Main2}. Using a nonlinear flow a better bound is obtained in Section~\ref{Sec:Nonlinear}, which completes the proof of Theorem~\ref{Thm:Main2}. The last section is devoted to various considerations on flows and, in particular, to improvements based on the nonlinear flow method.

\subsection*{Notations}

If $A=(A_{ij})_{1\le i,j\le d}$ and $B=(B_{ij})_{1\le i,j\le d}$ are two matrices, let $A\!:\!B=\sum_{i,j=1}^dA_{ij}\,B_{ij}$ and $|A|^2=A\!:\!A$. If $a$ and $b$ take values in $\R^d$, we adopt the definitions:
\begin{multline*}
a\cdot b=\sum_{i=1}^da_i\,b_i\;,\quad\nabla\cdot a=\sum_{i=1}^d\frac{\partial a_i} {\partial x_i}\;,\\
 a\otimes b=(a_i\,b_j)_{1\le i,j\le d}\;,\quad\nabla\otimes a=\(\frac{\partial a_j}{\partial x_i}\)_{1\le i,j\le d}\,.
\end{multline*}

\section{Preliminary results}\label{Sec:Prelim}

Let us recall that $\Omega\subset\R^d$, $d\ge 2$ is a bounded domain with smooth boundary (or an open interval if $d=1$) and let $\lambda_2$ be the first non-zero eigenvalue of the Laplace operator on $\Omega$, supplemented with homogeneous Neumann boundary conditions. We shall denote by $n$ a unit outgoing normal vector of $\partial\Omega$ and will denote by $u_2\in\H^1(\Omega)$ a non-trivial eigenfunction associated with the lowest positive eigenvalue $\lambda_2$, so that
\be{Eqn:FirstEigenvalue}
-\Delta u_2=\lambda_2\,u_2\quad\mbox{in}\quad\Omega\;,\quad\partial_nu_2=0\quad\mbox{on}\quad\partial\Omega\,.
\ee
As a trivial observation we may observe that $u_2$ is in $\H^2(\Omega)$.
\begin{Lemm}\label{Lem:LinInterp} With the above notations, for any $u\in\H^2(\Omega)$ such that $\partial_nu=0$ on $\partial\Omega$, we have
\[
\(\intg{|\nabla u|^2}\)^2\le\intg{|\Delta u|^2}\intg{|u|^2}\,.
\]
As a consequence, we also have
\be{Ineq:First}
\lambda_2\intg{|\nabla u|^2}\le\intg{|\Delta u|^2}\,,
\ee
and equality holds for any eigenfunction associated with $\lambda_2$. 
\end{Lemm}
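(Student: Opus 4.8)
The plan is to establish the integral inequality by a single integration by parts followed by the Cauchy--Schwarz inequality, and then to deduce the spectral estimate \eqref{Ineq:First} by combining it with the Poincar\'e inequality applied to the mean-zero part of $u$.

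First I would use Green's formula, valid for any $u\in\H^2(\Omega)$: since $\partial_nu=0$ on $\partial\Omega$, the boundary term $\dig{u\,\partial_nu}$ vanishes and
\[
\intg{|\nabla u|^2}=-\intg{u\,\Delta u}\,.
\]
The Cauchy--Schwarz inequality then gives $\intg{|\nabla u|^2}\le\big(\intg{u^2}\big)^{1/2}\,\big(\intg{|\Delta u|^2}\big)^{1/2}$, and squaring this yields the first claimed inequality.

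Next, to get \eqref{Ineq:First}, I would apply the inequality just proved not to $u$ but to $v:=u-\intg u$ (recall $|\Omega|=1$), which again belongs to $\H^2(\Omega)$, satisfies $\partial_nv=0$, and has $\nabla v=\nabla u$ and $\Delta v=\Delta u$. Since $v$ has zero average, the spectral gap (Poincar\'e) inequality gives $\lambda_2\intg{v^2}\le\intg{|\nabla v|^2}=\intg{|\nabla u|^2}$, so that
\[
\Big(\intg{|\nabla u|^2}\Big)^2\le\intg{|\Delta u|^2}\,\intg{v^2}\le\frac1{\lambda_2}\,\intg{|\Delta u|^2}\,\intg{|\nabla u|^2}\,.
\]
If $\intg{|\nabla u|^2}=0$ then \eqref{Ineq:First} is trivial; otherwise dividing by $\intg{|\nabla u|^2}$ gives it.

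For the equality case, if $u=u_2$ solves \eqref{Eqn:FirstEigenvalue}, then $\Delta u_2=-\lambda_2\,u_2$, hence $\intg{|\Delta u_2|^2}=\lambda_2^2\,\intg{u_2^2}$, while the integration by parts above gives $\intg{|\nabla u_2|^2}=-\intg{u_2\,\Delta u_2}=\lambda_2\,\intg{u_2^2}$; therefore $\lambda_2\,\intg{|\nabla u_2|^2}=\intg{|\Delta u_2|^2}$. I do not anticipate any genuine difficulty here: the only mild technical points are the validity of Green's formula at the $\H^2$ level and the use of the Poincar\'e inequality for mean-zero $\H^1$ functions, both of which are entirely standard, so the main thing requiring a line of care is simply the bookkeeping in the mean-zero reduction and the verification of the equality case.
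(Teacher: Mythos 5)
Your proof is correct and follows essentially the same route as the paper: the paper's expansion of $\intg{\left|\tfrac1{\sqrt\mu}\,\Delta u+\lambda_2\,\sqrt\mu\,u\right|^2}$ with optimization in $\mu$ is just the integration-by-parts-plus-Cauchy--Schwarz argument you wrote, and the deduction of \eqref{Ineq:First} via the mean-zero reduction and the Poincar\'e inequality, as well as the direct check of equality for $u_2$, are exactly the paper's steps.
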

\begin{proof} By expanding the square and integrating by parts the cross term, we notice that
\begin{multline*}
0\le\intg{\left|\frac1{\sqrt\mu}\,\Delta u+\lambda_2\,\sqrt\mu\,u\right|^2}\\
=\frac1\mu\,\nrm{\Delta u}2^2-2\,\lambda_2\,\nrm{\nabla u}2^2+\lambda_2^2\,\mu\,\nrm u2^2\,,
\end{multline*}
where $\mu$ is an arbitrary positive real parameter. After optimizing on $\mu>0$, we arrive at
\[
0\le2\,\lambda_2\(\nrm{\Delta u}2\,\nrm u2-\nrm{\nabla u}2^2\)\,.
\]
To check the equality case with $u=u_2$, it is enough to multiply~\eqref{Eqn:FirstEigenvalue} by $u_2$ and by $-\Delta u_2$, and then integrate by parts. By definition of $\lambda_2$, we know that
\[
\intg{|\nabla u|^2}\ge\lambda_2\intg{|u|^2}\quad\mbox{if}\quad\intg{u}=0
\]
with equality again if $u=u_2$. This concludes the proof. Notice indeed that the condition $\intg{u}=0$ can always be imposed without loss of generality, by adding the appropriate constant to $u$.\end{proof}

\medskip A key result for this paper is based on the computation of $(\Delta u)^2$ in terms of the Hessian matrix of $u$, which involves integrations by parts and boundary terms. The following result can be found in~\cite[Lemma 5.1]{GTS} or~\cite{MR775683}.
\begin{Lemm}\label{Lem:Convex} If $\Omega$ is a smooth convex domain in $\R^d$ and if $u\in C^3(\overline\Omega)$ is such that $\partial_nu=0$ on $\partial\Omega$, then
\[
-\sum_{i,j=1}^d\dig{\partial_{ij}^2u\,\partial_i u\, n_j}\geq 0\,.
\]
As a consequence, if $u\in\mathrm H^2(\Omega)$ is such that $\partial_nu=0$, then we have that
\[
\intg{|\Delta u|^2}\ge\intg{|\mathrm{Hess}\,u|^2}\,.
\]
\end{Lemm}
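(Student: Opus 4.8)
The plan is to prove the pointwise boundary inequality first and then feed it into the Bochner--type identity obtained by integrating $(\Delta u)^2$ by parts twice. For the first claim, I would work at a point $x_0\in\partial\Omega$ and choose coordinates so that $n=e_d$ is the outward normal there, with $e_1,\dots,e_{d-1}$ tangent to $\partial\Omega$. The Neumann condition $\partial_n u=0$ holds along the whole boundary, so differentiating it \emph{tangentially} gives $\partial_i(\nabla u\cdot n)=0$ for $i=1,\dots,d-1$ on $\partial\Omega$; expanding, $\sum_j \partial^2_{ij}u\,n_j = -\sum_j \partial_j u\,\partial_i n_j$. Since $\partial_n u=0$, the gradient $\nabla u$ is tangential at $x_0$, i.e. $\partial_d u=0$ there, so the only surviving terms in $\sum_{i,j}\partial^2_{ij}u\,\partial_i u\,n_j$ are those with $i\le d-1$, and plugging in the relation above rewrites this sum as $-\sum_{i,k\le d-1}\partial_i u\,\partial_k u\,\partial_i n_k$, which is (minus) the second fundamental form of $\partial\Omega$ evaluated on the tangential vector $\nabla u$. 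Convexity of $\Omega$ means this second fundamental form is nonnegative (with the outward normal convention), so $-\sum_{i,j}\partial^2_{ij}u\,\partial_i u\,n_j\ge 0$ pointwise on $\partial\Omega$, hence after integration over $\partial\Omega$. This is exactly the content cited from \cite[Lemma 5.1]{GTS}.

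For the second, integral claim, I would start from the identity, valid for $u\in C^3(\overline\Omega)$,
\[
\intg{|\Delta u|^2}=\intg{|\mathrm{Hess}\,u|^2}+\sum_{i,j=1}^d\dig{\big(\partial_i u\,\partial^2_{jj}u-\partial_j u\,\partial^2_{ij}u\big)n_i}\,,
\]
which comes from writing $\Delta u\,\Delta u=\sum_{i,j}\partial^2_{ii}u\,\partial^2_{jj}u$, integrating by parts once in $x_i$ to move a derivative onto $\partial^2_{jj}u$, integrating by parts again in $x_j$, and collecting the two boundary contributions; the interior term becomes $\sum_{i,j}\partial^2_{ij}u\,\partial^2_{ij}u=|\mathrm{Hess}\,u|^2$. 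On $\partial\Omega$, using $\partial_n u=\sum_i \partial_i u\,n_i=0$ one checks that $\sum_{i,j}\partial_i u\,\partial^2_{jj}u\,n_i=(\Delta u)(\partial_n u)=0$, so the boundary term reduces to $-\sum_{i,j}\dig{\partial^2_{ij}u\,\partial_j u\,n_i}$, which is precisely the nonnegative quantity from the first part. Therefore $\intg{|\Delta u|^2}\ge\intg{|\mathrm{Hess}\,u|^2}$ for $u\in C^3(\overline\Omega)$ with $\partial_n u=0$, and a density/approximation argument extends it to $u\in\mathrm H^2(\Omega)$ with $\partial_n u=0$ (in the trace sense), since both sides are continuous with respect to the $\mathrm H^2(\Omega)$ norm.

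The main obstacle, and the only step requiring real care, is the boundary bookkeeping: correctly identifying which terms survive after using $\partial_n u=0$ and its tangential derivatives, and getting the sign conventions for the second fundamental form right so that convexity enters with the correct orientation. The two integrations by parts that produce the Bochner identity are routine once one is careful that no extra boundary terms are dropped, and the density argument is standard. Since the statement is quoted from \cite{GTS} and \cite{MR775683}, I would likely present only the short derivation above and refer to those sources for the detailed boundary computation.
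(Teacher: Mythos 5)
Your proof is correct, and it is essentially the argument the paper relies on implicitly: the paper gives no proof of this lemma, citing instead \cite[Lemma 5.1]{GTS} and \cite{MR775683}, and your derivation (tangential differentiation of $\partial_n u=0$ to express $\sum_j\partial^2_{ij}u\,n_j$ through the second fundamental form, nonnegativity of that form by convexity, then two integrations by parts to identify the boundary term, plus density for the $\mathrm H^2$ statement) is exactly the standard computation found in those references. The only step deserving a word of justification, which you correctly flag, is the approximation of $\mathrm H^2$ functions with vanishing Neumann trace by smooth functions satisfying the same boundary condition, which is standard for smooth domains (e.g.\ via elliptic regularity for an auxiliary Neumann problem).
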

In Lemma~\ref{Lem:Convex}, the convexity is an essential ingredient, and this is where the convexity assumption comes from in all results of this paper.

Consider on $\H^1(\Omega)$ the functional
\be{JLambda}
\mathcal J_\Lambda[u]:=\nrm{\nabla u}2^2-\frac\Lambda{p-1}\,\left[\nrm u{p+1}^2-\nrm u2^2\right]
\ee
if $p\neq1$, and
\[
\mathcal J_\Lambda[u]:=\nrm{\nabla u}2^2-\frac\Lambda2\intg{|u|^2\,\log\(\frac{|u|^2}{\nrm u2^2}\)}
\]
if $p=1$.
\begin{Lemm}\label{Lem:Upper} There exists a function $u\in\H^1(\Omega)$ such that $\partial_nu=0$ and $\mathcal J_\Lambda[u]<0$ if $\Lambda>\lambda_2$.\end{Lemm}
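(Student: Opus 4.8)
The goal is to exhibit a test function $u$ with $\partial_n u = 0$ for which $\mathcal J_\Lambda[u]<0$ whenever $\Lambda>\lambda_2$. The natural candidate is a perturbation of a constant in the direction of the second Neumann eigenfunction $u_2$, so I would set $u = 1 + \varepsilon\, u_2$ with $\varepsilon$ small (recall $u_2$ satisfies $-\Delta u_2=\lambda_2 u_2$, $\partial_n u_2=0$, so $\partial_n u=0$; and without loss of generality $\int_\Omega u_2\,dx=0$ and, say, $\nrm{u_2}2=1$). The plan is to Taylor-expand $\mathcal J_\Lambda[1+\varepsilon u_2]$ in powers of $\varepsilon$ and show that the $\varepsilon^2$-coefficient is $\varepsilon^2(\lambda_2-\Lambda)$ up to the normalization, hence strictly negative for small $\varepsilon$ when $\Lambda>\lambda_2$.

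\medskip First I would treat the case $p\neq 1$. Using $|\Omega|=1$, one computes $\nrm{\nabla u}2^2=\varepsilon^2\nrm{\nabla u_2}2^2=\varepsilon^2\lambda_2$ (after the normalization $\nrm{u_2}2=1$, integrating by parts against $-\Delta u_2=\lambda_2 u_2$). For the nonlinear bracket, I would expand $\nrm u2^2 = 1+\varepsilon^2$ (the cross term vanishes because $\int u_2=0$) and $\nrm u{p+1}^2 = \big(\int (1+\varepsilon u_2)^{p+1}\big)^{2/(p+1)}$. Expanding $(1+\varepsilon u_2)^{p+1}=1+(p+1)\varepsilon u_2+\tfrac{(p+1)p}2\varepsilon^2 u_2^2+o(\varepsilon^2)$ and integrating, the linear term again drops, giving $\int(1+\varepsilon u_2)^{p+1}=1+\tfrac{(p+1)p}2\varepsilon^2+o(\varepsilon^2)$, and then raising to the power $2/(p+1)$ yields $\nrm u{p+1}^2 = 1+p\,\varepsilon^2+o(\varepsilon^2)$. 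Therefore $\nrm u{p+1}^2-\nrm u2^2 = (p-1)\varepsilon^2+o(\varepsilon^2)$, so $\frac{\Lambda}{p-1}\big[\nrm u{p+1}^2-\nrm u2^2\big]=\Lambda\,\varepsilon^2+o(\varepsilon^2)$ — note this is valid with the correct sign regardless of whether $p>1$ or $p<1$. Hence $\mathcal J_\Lambda[1+\varepsilon u_2]=\varepsilon^2(\lambda_2-\Lambda)+o(\varepsilon^2)$, which is $<0$ for $\varepsilon\neq0$ small enough once $\Lambda>\lambda_2$.

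\medskip For the limiting case $p=1$, the same test function works: the logarithmic term $\int_\Omega |u|^2\log(|u|^2/\nrm u2^2)\,dx$ has to be expanded to second order. Writing $|u|^2=1+2\varepsilon u_2+\varepsilon^2 u_2^2$ and $\nrm u2^2=1+\varepsilon^2$, one gets $\log(|u|^2/\nrm u2^2)=2\varepsilon u_2+o(\varepsilon)$ pointwise, so the product expands to $2\varepsilon u_2+4\varepsilon^2 u_2^2 + o(\varepsilon^2)$ up to lower-order terms; integrating kills the linear piece and leaves $2\varepsilon^2\nrm{u_2}2^2 = 2\varepsilon^2$ to leading order (this is exactly the $p\to1$ limit of the computation above, where $p\,\varepsilon^2\to\varepsilon^2$ and the bracket $\frac{1}{p-1}[\cdots]$ becomes $\frac12\int|u|^2\log(\cdots)$). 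Thus again $\mathcal J_\Lambda[1+\varepsilon u_2]=\varepsilon^2(\lambda_2-\Lambda)+o(\varepsilon^2)<0$ for small $\varepsilon$. I would remark that the statement is essentially the sharpness half of the elementary bound $\Lambda_\star\le\lambda_2$, obtained by linearizing around constants; the only mild technical care needed is justifying the Taylor expansion of the $\L^{p+1}$ norm and of the entropy term uniformly in a neighborhood of the constant, which is routine since $u_2\in\H^2(\Omega)\subset\L^\infty$ in the relevant dimensions (or one truncates), so there is no serious obstacle here.
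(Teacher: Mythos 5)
Your proof is correct and follows essentially the same route as the paper: perturb the constant function as $1+\epsilon\,u_2$ and read off that the second-order coefficient of $\mathcal J_\Lambda$ is $\epsilon^2(\lambda_2-\Lambda)\,\nrm{u_2}2^2<0$ for $\Lambda>\lambda_2$, the paper merely stating the expansion $\mathcal J_\Lambda[1+\epsilon\,w]\sim\epsilon^2\big[\nrm{\nabla w}2^2-\Lambda\,\nrm w2^2\big]$ before choosing $w=u_2$. The only blemish is a bookkeeping slip in your $p=1$ expansion (the integrand is $2\epsilon\,u_2+3\,\epsilon^2u_2^2-\epsilon^2+o(\epsilon^2)$, not $2\epsilon\,u_2+4\,\epsilon^2u_2^2$), but it still integrates to $2\,\epsilon^2+o(\epsilon^2)$, so your conclusion stands.
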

\begin{proof} A simple computations shows that
\[
\mathcal J_\Lambda[1+\epsilon\,w]\sim\epsilon^2\,\left[\nrm{\nabla w}2^2-\Lambda\,\nrm w2^2\right]
\]
as $\epsilon\to0$. By choosing $w=u_2$ to be an eigenfunction associated with $\lambda_2$, we get that 
\[
\mathcal J_\Lambda[1+\epsilon\,w]\sim\epsilon^2\(\lambda_2-\Lambda\)\,\nrm w2^2
\]
is negative for $\epsilon>0$ small enough if $\Lambda>\lambda_2$.
\end{proof}
Lemma~\ref{Lem:Upper} provides the upper bound in Theorem~\ref{Thm:Main2}. Indeed this proves that
\[
\Lambda_\star\le\lambda_2\,.
\]
This method has been widely exploited and a similar argument can be found for instance in~\cite{MR849484}.

\section{Proof of Theorem~\ref{Thm:Main1}}\label{Sec:Equiv}

Assume that $p>1$ and let us recall that
\[
\mu(\lambda):=\inf_{u\in\H^1(\Omega)\setminus\{0\}}\mathcal Q_\lambda[u]\quad\mbox{with}\quad\mathcal Q_\lambda[u]:=\frac{\nrm{\nabla u}2^2+\lambda\,\nrm u2^2}{\nrm u{p+1}^2}\,.
\]
We denote by $\kappa_{p,d}$ the optimal constant in the following Gagliardo-Niren\-berg inequality on $\R^d$:
\[
\nrmrd{\nabla v}2^2+\nrmrd v2^2\ge\kappa_{p,d}\,\nrmrd v{p+1}^2\quad\forall\,v\in\H^1(\R^d)\,.
\]
\begin{Lemm}\label{Lem:1} If $p\in(1,2^*-1)$, the function $\lambda\mapsto\mu(\lambda)$ is monotone increasing, concave, such that $\mu(\lambda)\le\lambda$ for any $\lambda>0$, and $\mu(\lambda)=\lambda$ if and only if $0<\lambda\le\mu_2=\Lambda_\star/(p-1)$. Moreover, we have
\[
\mu(\lambda)\sim2^\frac{1-p}{1+p}\,\kappa_{p,d}\,\lambda^{1-\frac d2\,\frac{p-1}{p+1}}\quad\mbox{as}\quad\lambda\to+\infty\,.
\]
\end{Lemm}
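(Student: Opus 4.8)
The plan is to establish each assertion of Lemma~\ref{Lem:1} in turn, treating the monotonicity/concavity as elementary, the equality‐case characterization as the heart of the statement, and the large‐$\lambda$ asymptotics as a scaling argument.

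\textbf{Monotonicity, concavity, and the inequality $\mu(\lambda)\le\lambda$.} First I would observe that for fixed $u$ the map $\lambda\mapsto\mathcal Q_\lambda[u]$ is affine and increasing in $\lambda$, so $\mu(\lambda)=\inf_u\mathcal Q_\lambda[u]$ is an infimum of increasing affine functions, hence monotone nondecreasing and concave; strict monotonicity follows because $\mu(\lambda)\ge\lambda\,\nrm u2^2/\nrm u{p+1}^2$ evaluated along a near‐minimizer, or more simply because the difference quotient is $\nrm u2^2/\nrm u{p+1}^2>0$. For the bound $\mu(\lambda)\le\lambda$, test $\mathcal Q_\lambda$ with the constant function $u\equiv1$: since $|\Omega|=1$, all norms of $1$ equal $1$, $\nabla 1=0$, and $\mathcal Q_\lambda[1]=\lambda$. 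So $\mu(\lambda)\le\lambda$ for every $\lambda>0$.

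\textbf{The equality case $\mu(\lambda)=\lambda$.} This is where I expect the real work, and I would route it through the functional $\mathcal J_\Lambda$ of \eqref{JLambda} and the definition of $\Lambda_\star$ in \eqref{GenInterpIneq}. The statement $\mu(\lambda)=\lambda$ is, after rearranging $\mathcal Q_\lambda[u]\ge\lambda$ for all $u$, precisely the inequality $\nrm{\nabla u}2^2\ge\lambda\,(\,\nrm u{p+1}^2-\nrm u2^2)/(p-1)\cdot(p-1)$, i.e. $\mathcal J_{\lambda(p-1)}[u]\ge0$ for all $u\in\H^1(\Omega)$; by the very definition of $\Lambda_\star$ as the \emph{best} constant in \eqref{GenInterpIneq}, this holds for all $u$ exactly when $\lambda(p-1)\le\Lambda_\star$, that is $\lambda\le\Lambda_\star/(p-1)=:\mu_2$. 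For the converse direction I would invoke Lemma~\ref{Lem:Upper}: if $\lambda(p-1)>\Lambda_\star$ one must show $\mu(\lambda)<\lambda$, and this is delicate only if $\Lambda_\star<\lambda_2$, since Lemma~\ref{Lem:Upper} only furnishes a test function with $\mathcal J_\Lambda<0$ for $\Lambda>\lambda_2$; the cleaner argument is that $\Lambda_\star$ being the \emph{optimal} (largest) constant means that for any $\Lambda>\Lambda_\star$ there exists $u$ with $\mathcal J_\Lambda[u]<0$, which is exactly $\mathcal Q_\lambda[u]<\lambda$ with $\lambda=\Lambda/(p-1)$. Hence $\mu(\lambda)<\lambda$ for $\lambda>\mu_2$, completing the characterization and simultaneously identifying $\mu_2=\Lambda_\star/(p-1)$ consistently with the notation of Theorem~\ref{Thm:Main1}.

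\textbf{The asymptotics as $\lambda\to+\infty$.} Here I would use a concentration/rescaling argument: for large $\lambda$, minimizers of $\mathcal Q_\lambda$ concentrate at an interior point of $\Omega$ (the boundary plays no role once the profile is small), so after the change of variables $u(x)=v(\sqrt\lambda\,(x-x_0))$ the quotient $\mathcal Q_\lambda[u]$ converges, up to the explicit powers of $\lambda$ coming from the scaling of each norm, to the optimal constant $\kappa_{p,d}$ in the Gagliardo–Nirenberg inequality on $\R^d$. Tracking the scaling: $\nrm{\nabla u}2^2$ scales like $\lambda^{1-d/2}$, $\lambda\nrm u2^2$ like $\lambda^{1-d/2}$ as well, and $\nrm u{p+1}^2$ like $\lambda^{-\frac d2\cdot\frac2{p+1}}=\lambda^{-d/(p+1)}$ times a power of the normalization, so dividing gives the stated exponent $1-\frac d2\,\frac{p-1}{p+1}$, and the prefactor $2^{(1-p)/(1+p)}$ arises from rewriting $\nrmrd{\nabla v}2^2+\nrmrd v2^2\ge\kappa_{p,d}\nrmrd v{p+1}^2$ in the form with $\nrmrd{\nabla v}2^2+\lambda\nrmrd v2^2$ and optimizing the dilation (the balance $\nrmrd{\nabla v}2^2=\nrmrd v2^2$ is broken into a factor $2$ split between the two terms). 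The matching lower bound $\liminf\lambda^{-(1-\frac d2\frac{p-1}{p+1})}\mu(\lambda)\ge 2^{(1-p)/(1+p)}\kappa_{p,d}$ follows from applying the $\R^d$ Gagliardo–Nirenberg inequality (after extending an $\H^1(\Omega)$ function to $\R^d$, or directly since $\Omega$ is bounded) to the rescaled near‐minimizer, and the upper bound from testing $\mathcal Q_\lambda$ with a fixed smooth bump rescaled by $\sqrt\lambda$ and localized in $\Omega$.

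\textbf{Main obstacle.} The subtle point is the strict inequality $\mu(\lambda)<\lambda$ for $\lambda>\mu_2$: it requires knowing not just that $\Lambda_\star\le\lambda_2$ but that $\Lambda_\star$ is genuinely the sharp constant, so that breaking it is possible for every $\Lambda>\Lambda_\star$; once the definition of $\Lambda_\star$ as an optimal constant is used this is immediate, but one must be careful that a minimizer for $\mu(\lambda)$ exists (standard by sub‐criticality of $p+1<2^*$ and compact Sobolev embedding on the bounded domain $\Omega$) so that the infimum defining $\Lambda_\star$ is attained or at least approached by admissible $\H^1(\Omega)$ functions. The asymptotic regime is the most computational part but contains no conceptual difficulty beyond a careful bookkeeping of the scaling exponents and the optimization of the dilation parameter.
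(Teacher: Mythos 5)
Your handling of the monotonicity, the concavity, the bound $\mu(\lambda)\le\lambda$, and the equality case is correct, and for the equality case your route is in fact more direct than the paper's: you note that $\mu(\lambda)=\lambda$ is verbatim inequality~\eqref{GenInterpIneq} with $\Lambda=\lambda\,(p-1)$, so the characterization $\lambda\le\Lambda_\star/(p-1)$ follows from the definition of $\Lambda_\star$ as the sharp constant (which, being an infimum of quotients, is itself admissible), whereas the paper combines the Euler--Lagrange identity for a minimizer of $\mathcal Q_\lambda$ with~\eqref{GenInterpIneq} to force $\mu(\lambda)=\lambda$ and the minimizer to be constant. One minor caveat: your ``difference quotient'' argument for \emph{strict} monotonicity needs the minimizer at the larger value of $\lambda$ (the ratio $\nrm u2^2/\nrm u{p+1}^2$ could a priori degenerate along a minimizing sequence); the paper instead gets strictness from concavity combined with the growth of $\mu(\lambda)$ at infinity.

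The genuine gap is in the large-$\lambda$ asymptotics. You claim that minimizers concentrate at an \emph{interior} point, that the limiting constant is the whole-space constant $\kappa_{p,d}$, and that the prefactor $2^{(1-p)/(1+p)}$ arises from ``optimizing the dilation''. This is not what happens. With homogeneous Neumann conditions the infimum is asymptotically realized by functions concentrating \emph{on the boundary}: after the rescaling $v_\lambda(x)=u_\lambda(x/\sqrt\lambda)$ the limiting problem is the Gagliardo--Nirenberg inequality on the half space $\R^d_+$, whose optimal constant is exactly $2^{(1-p)/(1+p)}\,\kappa_{p,d}$. Indeed, restricting a whole-space optimizer centered at a boundary point to $\R^d_+$ halves $\|\nabla v\|_{\L^2(\R^d_+)}^2$ and $\|v\|_{\L^2(\R^d_+)}^2$ but multiplies $\|v\|_{\L^{p+1}(\R^d_+)}^2$ by $2^{-2/(p+1)}$, so the quotient is multiplied by $2^{\frac2{p+1}-1}=2^{\frac{1-p}{1+p}}<1$ when $p>1$: a half-bubble at the boundary strictly beats an interior bubble, and interior concentration would only produce the non-sharp value $\kappa_{p,d}$. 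There is no additional factor of $2$ coming from the dilation; the $\lambda$-dependence is entirely carried by the exponent $\lambda^{1-\frac d2\,\frac{p-1}{p+1}}$. Consequently your test-function construction must use boundary spikes to get the correct upper bound, and the matching lower bound requires a concentration-compactness analysis of the rescaled minimizers on $\Omega_\lambda$ which distinguishes whether the concentration point remains at bounded distance from $\partial\Omega_\lambda$ (half-space limit) or escapes to infinity (whole-space limit, with a larger constant), as in the argument the paper borrows from~\cite[Lemma~5]{DolEsLa-APDE2014}.
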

\begin{proof} For any given $u\in\H^1(\Omega)\setminus\{0\}$, $\lambda\mapsto \mathcal Q_\lambda[u]$ is affine, increasing. By taking an infimum, we know that, as a function of $\lambda$, $\mu$ is concave, non-decreasing. Using $u\equiv1$ as a test function, we know that $\mu(\lambda)\le\lambda$ for any $\lambda>0$. By standard variational methods, we know that there is an optimal function $u\in\H^1(\Omega)\setminus\{0\}$, so that
\[
\nrm{\nabla u}2^2+\lambda\,\nrm u2^2=\mu(\lambda)\,\nrm u{p+1}^2\,.
\]
On the other hand, we know from~\eqref{GenInterpIneq} that
\[
\nrm{\nabla u}2^2+\frac{\Lambda_\star}{p-1}\,\nrm u2^2\ge\frac{\Lambda_\star}{p-1}\,\nrm u{p+1}^2\,.
\]
Hence we have the inequality
\[
\(1-\frac\lambda{\Lambda_\star}\,(p-1)\)\nrm{\nabla u}2^2\le\(\mu(\lambda)-\lambda\)\,\nrm u{p+1}^2\,.
\]
If $\lambda\le\Lambda_\star/(p-1)$, the l.h.s.~is nonnegative while the r.h.s.~is nonpositive because $\mu(\lambda)\le\lambda$, so that we conclude at once that $\mu(\lambda)=\lambda$ and $u$ is constant. As a consequence, $\mu_2\ge\Lambda_\star/(p-1)$. On the other hand, by definition of $\Lambda_\star$, we know that $\mu_2\le\Lambda_\star/(p-1)$.

The regime as $\lambda\to\infty$ is easily studied by a rescaling. If $u_\lambda$ denotes an optimal function such that $\mathcal Q_\lambda[u_\lambda]=\mu(\lambda)$, then $v_\lambda(x):=u_\lambda(x/\sqrt\lambda)$ is an optimal function for
\[
\|\nabla v\|_{\L^2(\Omega_\lambda)}^2+\|v\|_{\L^2(\Omega_\lambda)}^2\ge\frac{\mu(\lambda)}{\lambda^{1-\frac d2\,\frac{p-1}{p+1}}}\,\|v\|_{\L^{p+1}(\Omega_\lambda)}^2\quad\forall\,v\in\H^1(\Omega_\lambda)
\]
where $\Omega_\lambda:=\{x\in\R^d\,:\,\lambda^{-1/2}\,x\in\Omega\}$. Using truncations of the optimal functions for the Gagliardo-Nirenberg inequality on $\R^d_+=\{(x_1,x_2,...x_d)\in\R^d\,:\,x_d>0\}$ and an analysis of the convergence of an extension of $v_\lambda$ in $\H^1(\R^d)$ as $\lambda\to\infty$ based on standard concentration-compactness methods, up to the extraction of subsequences and translations, we get that the limit function $v$ is optimal for the inequality
\[
\|\nabla v\|_{\L^2(\R^d_+)}^2+\|v\|_{\L^2(\R^d_+)}^2\ge2^\frac{1-p}{1+p}\,\kappa_{p,d}\,\|v\|_{\L^{p+1}(\R^d_+)}^2\quad\forall\,v\in \H^1(\R^d_+)\,.
\]
See~\cite[Lemma~5]{DolEsLa-APDE2014} for more details in a similar case.

By definition, $\lambda\mapsto\mu(\lambda)$ is monotone non-decreasing. As a consequence of the behavior at infinity and of the concavity property, this monotonicity is strict. Hence $\mu$ is a monotone increasing function of $\lambda$.
\end{proof}

Assume that $p<1$ and let us recall that
\[
\lambda(\mu):=\inf_{u\in\H^1(\Omega)\setminus\{0\}}\mathcal Q^\mu[u]\quad\mbox{with}\quad\mathcal Q^\mu[u]:=\frac{\nrm{\nabla u}2^2+\mu\,\nrm u{p+1}^2}{\nrm u2^2}\,.
\]
We denote by $\kappa_{p,d}^+$ the optimal constant in the following Gagliardo-Niren\-berg inequality on $\R^d_+$:
\[
\|\nabla v\|_{\L^2(\R^d_+)}^2+\|v\|_{\L^{p+1}(\R^d_+)}^2\ge\kappa_{p,d}^+\,\|v\|_{\L^2(\R^d_+)}^2\quad\forall\,v\in\H^1(\R^d_+)\,.
\]
\begin{Lemm}\label{Lem:2} If $p\in(0,1)$, the function $\mu\mapsto\lambda(\mu)$ is monotone increasing, concave, such that $\lambda(\mu)\le\mu$ for any $\mu>0$, and $\lambda(\mu)=\mu$ if and only if $0<\mu\le\mu_2=\Lambda_\star/(1-p)$. Moreover, we have
\[
\lambda(\mu)\sim\kappa_{p,d}^+\,\mu^{\big(1+\frac d2\,\frac{1-p}{p+1}\big)^{-1}}\quad\mbox{as}\quad\mu\to+\infty\,.
\]
\end{Lemm}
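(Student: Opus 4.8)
The plan is to follow, \emph{mutatis mutandis}, the proof of Lemma~\ref{Lem:1}, interchanging the roles of the exponents $2$ and $p+1$ and replacing $\R^d$ by the half-space $\R^d_+$. For every fixed $u\in\H^1(\Omega)\setminus\{0\}$ the map $\mu\mapsto\mathcal Q^\mu[u]$ is affine and strictly increasing, because $\nrm u{p+1}^2>0$, so by taking the infimum over $u$ one sees that $\mu\mapsto\lambda(\mu)$ is concave and non-decreasing. Testing with $u\equiv1$ and using $|\Omega|=1$ gives $\mathcal Q^\mu[1]=\mu$, hence $\lambda(\mu)\le\mu$ for every $\mu>0$. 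Since $p+1<2<2^*$, the embeddings $\H^1(\Omega)\hookrightarrow\L^2(\Omega)$ and $\H^1(\Omega)\hookrightarrow\L^{p+1}(\Omega)$ are compact, so standard variational methods provide a minimizer $u\in\H^1(\Omega)\setminus\{0\}$ with $\nrm{\nabla u}2^2+\mu\,\nrm u{p+1}^2=\lambda(\mu)\,\nrm u2^2$.

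For the equality case, I would rewrite \eqref{GenInterpIneq}, valid for $p\in(0,1)$, in the equivalent form $\nrm{\nabla u}2^2+\frac{\Lambda_\star}{1-p}\,\nrm u{p+1}^2\ge\frac{\Lambda_\star}{1-p}\,\nrm u2^2$ for all $u\in\H^1(\Omega)$. Multiplying this inequality by $\mu\,(1-p)/\Lambda_\star\ge0$ and subtracting the minimizer identity yields
\[
\left(1-\frac{\mu\,(1-p)}{\Lambda_\star}\right)\nrm{\nabla u}2^2\le\big(\lambda(\mu)-\mu\big)\,\nrm u2^2\le0\,.
\]
If $\mu\le\Lambda_\star/(1-p)$ the left-hand side is non-negative, so both sides vanish; since $\nrm u2>0$ this forces $\lambda(\mu)=\mu$ (and $u$ constant when $\mu<\Lambda_\star/(1-p)$), whence $\mu_2\ge\Lambda_\star/(1-p)$. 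Conversely, if $\mu\,(1-p)>\Lambda_\star$ then, by definition of $\Lambda_\star$ as the optimal constant in \eqref{GenInterpIneq}, that inequality fails for $\Lambda=\mu\,(1-p)$, i.e.\ there exists $u$ with $\nrm{\nabla u}2^2+\mu\,\nrm u{p+1}^2<\mu\,\nrm u2^2$, so that $\lambda(\mu)<\mu$ and therefore $\mu_2\le\Lambda_\star/(1-p)$. Once the behaviour as $\mu\to+\infty$ is established, $\lambda$ is unbounded, so being concave and non-decreasing it must in fact be strictly increasing.

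It remains to identify the asymptotics. Let $u_\mu$ be a minimizer, define $\ell=\ell(\mu)>0$ by $\ell^{\,2+d\,\frac{1-p}{p+1}}=\mu^{-1}$, and set $v_\mu(x):=u_\mu(\ell\,x)\in\H^1(\Omega_\mu)$ with $\Omega_\mu:=\ell^{-1}\,\Omega$. A scaling computation then gives
\[
\lambda(\mu)=\ell^{-2}\inf_{v\in\H^1(\Omega_\mu)\setminus\{0\}}\frac{\|\nabla v\|_{\L^2(\Omega_\mu)}^2+\|v\|_{\L^{p+1}(\Omega_\mu)}^2}{\|v\|_{\L^2(\Omega_\mu)}^2}\,,\qquad\ell^{-2}=\mu^{\big(1+\frac d2\,\frac{1-p}{p+1}\big)^{-1}}\,.
\]
As $\mu\to+\infty$ one has $\ell\to0$, so $\Omega_\mu$ invades $\R^d$ and, near any point of its boundary, flattens onto $\R^d_+$ after blow-up; the expected outcome is that the above infimum converges to $\kappa_{p,d}^+$. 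Using truncations of the optimal functions for the Gagliardo--Nirenberg inequality on $\R^d_+$ placed near such a boundary point to obtain the upper bound, together with a concentration-compactness analysis of an $\H^1(\R^d)$-extension of $v_\mu$ (up to subsequences and translations) for the matching lower bound, one concludes that $\lambda(\mu)\sim\kappa_{p,d}^+\,\mu^{\big(1+\frac d2\,\frac{1-p}{p+1}\big)^{-1}}$; I would refer to \cite[Lemma~5]{DolEsLa-APDE2014} for a detailed treatment of an analogous situation. The main obstacle is precisely this last step --- ruling out interior concentration, which would only bring in the whole-space constant, ruling out vanishing, and matching the two bounds; everything else parallels the proof of Lemma~\ref{Lem:1}.
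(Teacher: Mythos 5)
Your proposal is correct and takes essentially the same route as the paper, whose proof of this lemma consists only of the remark that one repeats the strategy of Lemma~\ref{Lem:1} and refers to~\cite{DolEsLa-APDE2014} (Lemma~11 there, rather than Lemma~5) for the concentration-compactness details of the asymptotics. Your spelled-out equality case and rescaling argument are exactly that adaptation, at the same level of rigor as the paper for the final limiting step.
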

\begin{proof} The proof follows the same strategy as in the proof of Lemma~\ref{Lem:1}. See~\cite[Lemma~11]{DolEsLa-APDE2014} for more details in a similar case. \end{proof}

Recall that we denote by $\mu\mapsto\lambda(\mu)$ the inverse function of $\lambda\mapsto\mu(\lambda)$ and get in both cases, $p>1$ and $p<1$, the fact that
\[
\mu(\lambda)=O\(\lambda^{1-\frac d2\,\frac{p-1}{p+1}}\)\quad\mbox{as}\quad\lambda\to+\infty\,.
\]
\begin{Lemm}\label{Lem:3} Under the assumptions of Theorem~\ref{Thm:Main2}, we have $\nu(\mu)=\lambda(\mu)$ for any $\mu>0$ and, as a consequence, $\mu_2=\mu_3$.\end{Lemm}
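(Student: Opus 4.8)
The plan is to prove the identity $\nu(\mu)=\lambda(\mu)$ by recognizing that both quantities are two sides of the same Rayleigh-quotient optimization, read off in two different orders. Concretely, I would start from the definition of $\nu(\mu)$ in~\eqref{P3}. For a fixed admissible potential $\phi\in\mathcal A_\mu$, the lowest eigenvalue $\lambda_1(\Omega,-\varepsilon(p)\,\phi)$ is given by the variational characterization
\[
\lambda_1(\Omega,-\varepsilon(p)\,\phi)=\inf_{u\in\H^1(\Omega)\setminus\{0\}}\frac{\nrm{\nabla u}2^2-\varepsilon(p)\intg{\phi\,|u|^2}}{\nrm u2^2}\,,
\]
with Neumann boundary conditions built into the form domain $\H^1(\Omega)$ (no boundary term appears because the potential perturbation is of order zero). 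Then $\nu(\mu)=-\varepsilon(p)\inf_{\phi\in\mathcal A_\mu}\inf_{u}(\cdots)$, a double infimum which I may exchange freely.

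The heart of the argument is then the inner optimization over $\phi$ for fixed $u$. After swapping the infima, I must evaluate $-\varepsilon(p)\inf_{\phi\in\mathcal A_\mu}\big(-\varepsilon(p)\intg{\phi\,|u|^2}\big)=\sup_{\phi\in\mathcal A_\mu}\intg{\phi\,|u|^2}$ in the case $p>1$ (where $\varepsilon(p)=1$, $q=(p+1)/(p-1)$, and $\mathcal A_\mu=\{\phi\ge0:\nrm\phi q=\mu\}$), and similarly — with $\phi$ replaced by $\phi^{-1}$ and the sup turning into an inf — in the case $p<1$. By Hölder's inequality with conjugate exponents $q$ and $q'=(p+1)/2$, one has $\intg{\phi\,|u|^2}\le\nrm\phi q\,\nrm{u^2}{q'}=\mu\,\nrm u{p+1}^2$, with equality attained by choosing $\phi$ proportional to $|u|^{p-1}$ and normalized so that $\nrm\phi q=\mu$; this choice is admissible (nonnegative, in $\L^q$) whenever $u\in\H^1(\Omega)$, which suffices. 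Hence the $\phi$-optimization collapses the quotient to
\[
\nu(\mu)=\inf_{u\in\H^1(\Omega)\setminus\{0\}}\frac{\nrm{\nabla u}2^2+\varepsilon(p)\,\mu\,\nrm u{p+1}^2\;(\text{resp. }\ldots)}{\nrm u2^2}\,,
\]
which is exactly the definition of $\lambda(\mu)$ given in~\eqref{P2} (recalling that $\mu\mapsto\lambda(\mu)$ is the inverse of $\lambda\mapsto\mu(\lambda)$, i.e.\ in the $p>1$ case the constant in the inequality $\nrm{\nabla u}2^2+\lambda\,\nrm u2^2\ge\mu(\lambda)\,\nrm u{p+1}^2$). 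For $p<1$ the analogous Hölder step with $\phi^{-1}$ in place of $\phi$ produces the term $\mu\,\nrm u{p+1}^2$ in the numerator of $\mathcal Q^\mu[u]$, matching Lemma~\ref{Lem:2}. Once $\nu(\mu)=\lambda(\mu)$ for all $\mu>0$ is established, the equality $\mu_3=\mu_2$ is immediate from the definitions of the two thresholds as the supremum of the set where the respective optimal constant stays linear, $\nu(\mu)=\mu$ versus $\lambda(\mu)=\mu$.

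The main obstacle, and the only point requiring genuine care, is the legitimacy of exchanging the two infima and of the attainment in the Hölder step within the correct function spaces: one must check that the supremum over $\mathcal A_\mu$ is actually achieved (or approached) by admissible potentials, paying attention to the possibility that the optimal $u$ vanishes on a set of positive measure (in which case $|u|^{p-1}$ may fail to be in $\L^q$ when $p<1$), and to the fact that $\lambda_1$ as a function of $\phi$ is concave, which justifies that the order of the two infima does not matter without any minimax subtlety — here both operations are infima, so Fubini-type interchange of infima is unconditional. I would handle the vanishing issue by a density/truncation argument, approximating a general $u$ by functions bounded away from zero, or by noting that optimal $u$ for $\lambda(\mu)$ is a positive solution of the Euler--Lagrange equation and hence strictly positive by the strong maximum principle. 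I expect this to be short, and indeed the authors refer to~\cite[Lemma~11]{DolEsLa-APDE2014} and related sources for the analogous computation on the sphere; the adaptation to a bounded domain with Neumann conditions changes nothing essential in this particular lemma since no boundary terms are generated.
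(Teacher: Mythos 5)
Your argument is correct and is essentially the paper's own proof: both rest on the Rayleigh--quotient characterization of $\lambda_1$, H\"older's inequality with exponent $q$, and the saturating potential $\phi\propto|u|^{p-1}$; the paper merely organizes the same computation as two one-sided estimates (testing the quantity $\mathcal A$ with the Schr\"odinger ground state to get $\nu(\mu)\le\lambda(\mu)$, and with the optimal interpolation function together with $\phi=\mu\,u^{p-1}/\nrm u{p+1}^{p-1}$ for the reverse inequality), whereas you collapse it into an unconditional exchange of two infima, which has the minor advantage of not requiring attained optimizers and makes the $p<1$ approximation issue (non-integrability of $|u|^{p-1}$ where $u$ vanishes, handled by truncation or positivity) the only point needing care, exactly as you flag. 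One presentational caveat: your displayed formula after the $\phi$-optimization is mis-signed as written --- for $p>1$ the collapse gives $\nu(\mu)=\sup_{u\neq0}\big(\mu\,\nrm u{p+1}^2-\nrm{\nabla u}2^2\big)/\nrm u2^2$, i.e.\ the smallest admissible constant in~\eqref{P2}, while for $p<1$ the numerator should read $\nrm{\nabla u}2^2+\mu\,\nrm u{p+1}^2$ as in Lemma~\ref{Lem:2} --- but your surrounding prose shows the intended, correct identification, so this is a slip of notation rather than a gap.
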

\begin{proof} Assume first that $p>1$. The proof is based on two ways of estimating the quantity
\[
\mathcal A=\nrm{\nabla u}2^2+\lambda\,\nrm u2^2-\intg{\phi\,|u|^2}\,.
\]
On the one hand we may use H\"older's inequality to estimate
\[
\intg{\phi\,|u|^2}\le\nrm\phi q\,\nrm u{p+1}^2
\]
with $q=\frac{p+1}{p-1}$ and get
\[
\mathcal A\ge\nrm{\nabla u}2^2+\lambda\,\nrm u2^2-\mu\,\nrm u{p+1}^2
\]
with $\mu=\nrm\phi q$. Using $u\equiv1$ as a test function, we observe that the lowest eigenvalue $\lambda_1(\Omega,-\phi)$ of the Schr\"odinger operator $-\Delta-\phi$ is nonpositive. With $u=u_1$ an eigenfunction associated with $\lambda_1(\Omega,-\phi)$, we know that
\[
\mathcal A=\(\lambda-|\lambda_1(\Omega,-\phi)|\)\nrm u2^2\ge\nrm{\nabla u}2^2+\lambda\,\nrm u2^2-\mu\,\nrm u{p+1}^2
\]
is nonnegative if $\lambda=\lambda(\mu)$, thus proving that $\lambda(\mu)-|\lambda_1(\Omega,-\phi)|\ge0$ and hence
\[
\lambda(\mu)\ge\nu(\mu)\,.
\]
On the other hand, with $\phi=\mu\,u^{p-1}/\nrm u{p+1}^{p-1}$, we observe that
\begin{multline*}
0=\mathcal A=\nrm{\nabla u}2^2+\lambda\,\nrm u2^2-\mu\,\nrm u{p+1}^2\\
\ge\(\lambda-|\lambda_1(\Omega,-\phi)|\)\,\nrm u2^2\ge\(\lambda-\nu(\mu)\)\,\nrm u2^2
\end{multline*}
if we take $\mu=\mu(\lambda)$ and $u$ the corresponding optimal function. This proves that
\[
\lambda(\mu)\leq \nu(\mu)\,,
\]
which concludes the proof when $p>1$.

A similar computation can be done if $p<1$, based on the H\"older inequality
\[
\intg{u^{p+1}}\le\intg{u^{p+1}\,\phi^\frac{p+1}2\,\phi^{-\frac{p+1}2}}\le\(\intg{|u|^2\,\phi}\)^\frac{p+1}2\,\nrm{\phi^{-1}}q^\frac{p+1}2
\]
with $q=\frac{1+p}{1-p}$, that is
\[
\intg{|u|^2\,\phi}\ge\mu\,\nrm u{p+1}^2
\]
with $\mu^{-1}=\nrm{\phi^{-1}}q$. With
\[
\mathcal A=\nrm{\nabla u}2^2-\lambda\,\nrm u2^2+\intg{\phi\,|u|^2}\,,
\]
the computation is parallel to the one of the case $p>1$. Also see~\cite{DolEsLa-APDE2014} for similar estimates. \end{proof}
\begin{Lemm}\label{Lem:4} Under the assumptions of Theorem~\ref{Thm:Main2}, we have $\mu_1\le\mu_2$.\end{Lemm}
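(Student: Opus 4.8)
The plan is to exhibit, for every value of the parameter strictly larger than $\mu_2=\Lambda_\star/|p-1|$, a non constant positive solution of~\eqref{Eq3}, so that the infimum defining $\mu_1$ is at most $\mu_2$. By Lemmas~\ref{Lem:1} and~\ref{Lem:2}, $\mu_2$ is precisely the threshold above which the optimal functions of the variational problems in~(P2) cease to be constant; the idea is simply to turn such an optimal function into a solution of~\eqref{Eq3} by a scaling.

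Assume first $p>1$ and fix $\lambda>\mu_2=\Lambda_\star/(p-1)$. By Lemma~\ref{Lem:1}, the infimum $\mu(\lambda)$ is attained at some $u\in\H^1(\Omega)\setminus\{0\}$, which we may take nonnegative by replacing $u$ with $|u|$, and normalized so that $\nrm u{p+1}=1$. Writing the Euler--Lagrange equation of the unconstrained minimization of $\mathcal Q_\lambda$ over $\H^1(\Omega)$ — for which $\partial_nu=0$ is the natural boundary condition — one obtains $-\Delta u+\lambda\,u=\mu(\lambda)\,u^p$ in $\Omega$, hence $v:=\mu(\lambda)^{1/(p-1)}\,u$ solves~\eqref{Eq3} (with $\varepsilon(p)=1$, i.e.~\eqref{Eq1}) for the same $\lambda$. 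Elliptic bootstrap gives $v\in C^2(\overline\Omega)$, and since the right-hand side is nonnegative, the strong maximum principle together with Hopf's lemma and $\partial_nv=0$ yield $v>0$ on $\overline\Omega$. Moreover $v$ is not constant: otherwise $u$ would be constant, forcing $\mu(\lambda)=\mathcal Q_\lambda[u]=\lambda$, which contradicts $\mu(\lambda)<\lambda$, valid for $\lambda>\mu_2$ by Lemma~\ref{Lem:1}. Thus every $\lambda>\mu_2$ belongs to $\{\lambda>0\,:\,\mbox{\eqref{Eq3} has a non constant positive solution}\}$, and therefore $\mu_1\le\mu_2$.

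The case $p<1$ is analogous, with Lemma~\ref{Lem:2} in the role of Lemma~\ref{Lem:1}. Fix $\mu>\mu_2=\Lambda_\star/(1-p)$ and let $u$, normalized by $\nrm u2=1$ and taken nonnegative, attain $\lambda(\mu)$; its Euler--Lagrange equation is $-\Delta u+\mu\,\nrm u{p+1}^{1-p}\,u^p=\lambda(\mu)\,u$ with $\partial_nu=0$, so an appropriate positive multiple $v$ of $u$ solves~\eqref{Eq3} (with $\varepsilon(p)=-1$, i.e.~\eqref{Eq2}) at the parameter $\lambda(\mu)$. As before $v$ is non constant, because $\lambda(\mu)<\mu$ for $\mu>\mu_2$. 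Since $\lambda(\cdot)$ is continuous, increasing and satisfies $\lambda(\mu_2)=\mu_2$, we have $\lambda(\mu)\ge\mu_2$ and $\lambda(\mu)\to\mu_2$ as $\mu\to\mu_2^+$; in particular, for $\mu$ in a right-neighbourhood of $\mu_2$, $v$ is a small perturbation of the positive constant $\mu_2^{1/(p-1)}$, hence positive. Consequently the set defining $\mu_1$ contains parameters arbitrarily close to $\mu_2$ from above, and again $\mu_1\le\mu_2$.

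The only genuinely delicate point is the positivity of $v$ in the sub-linear regime $p<1$, where $u\mapsto u^p$ is not Lipschitz and, a priori, dead cores could form; this is why the argument is localized near the threshold $\mu_2$, where the minimizer stays uniformly close to a positive constant. (In any event, to rule out uniqueness it suffices to produce a non constant nonnegative solution, since the positive constant $\lambda^{1/(p-1)}$ is always a solution.) The remaining ingredients — the Euler--Lagrange computation, the scaling, the elliptic bootstrap and the maximum principle — are routine.
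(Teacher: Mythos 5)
Your argument is, at bottom, the paper's own argument run in the contrapositive direction: the paper takes an optimizer of~\eqref{P2}, writes its Euler--Lagrange equation~\eqref{EL}, uses homogeneity to normalize it into a solution of~\eqref{Eq3}, and exploits the equivalence ``optimizer constant $\Leftrightarrow\lambda(\mu)=\mu$''; you do exactly the same, starting from a parameter above $\mu_2$ and producing a non-constant solution. In the superlinear range $p>1$ your write-up is correct and complete (the bootstrap, strong maximum principle and Hopf lemma do give positivity, and non-constancy follows from $\mu(\lambda)<\lambda$ by Lemma~\ref{Lem:1}).

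The gap is in the sublinear case $p\in(0,1)$. Since $\mu_1$ is defined through non-constant \emph{positive} solutions and $s\mapsto s^p$ is not Lipschitz at $0$, a nonnegative critical point may vanish on a set (dead cores; this is exactly the compact support principle recalled at the end of the paper), and your remedy --- that for $\mu$ in a right neighbourhood of $\mu_2$ the minimizer is uniformly close to the positive constant, hence positive --- is asserted, not proved. Justifying it requires (i) a compactness argument for the minimizers as $\mu\to\mu_2^+$, (ii) knowing that the only optimizers of the threshold inequality at $\mu=\mu_2$ are the constants, which is precisely the delicate threshold question the paper does not settle (Proposition~\ref{Prop:Threshold} only proves strictness of the lower bound), and (iii) an upgrade from $\H^1$- to $\L^\infty$-closeness. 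Without (ii), the minimizers could converge to a non-constant threshold optimizer that touches zero, so the step is not routine and, as written, does not close the gap you yourself flagged; your parenthetical fallback (a non-constant \emph{nonnegative} solution) does not match the definition of $\mu_1$, which is stated for positive solutions. To be fair, the paper's one-line proof is silent on the same positivity issue (rigidity below $\mu_1$ only excludes non-constant positive solutions), so apart from this unresolved point your route coincides with the paper's.
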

\begin{proof} Let $u$ be an optimal function for~\eqref{P2}. It can be taken nonnegative without restriction and solves the Euler-Lagrange equation
\be{EL}
-\,\varepsilon(p)\,\Delta u+\lambda\,u-\mu\,\frac{u^p}{\nrm u{p+1}^{p-1}}=0\quad\mbox{in}\quad\Omega\,,\quad\partial_nu=0\quad\mbox{on}\quad\partial\Omega\,,
\ee
where $\lambda=\lambda(\mu)$ or equivalently $\mu=\mu(\lambda)$. By homogeneity, we can fix $\nrm u{p+1}$ as we wish and may choose $\nrm u{p+1}^{p-1}=\mu$, hence concluding that $u$ is constant if $\mu\le\mu_1$ and, as a consequence, $\lambda(\mu)=\mu$, thus proving that $\mu\le\mu_2$. The conclusion follows. \end{proof}

\section{Estimates based on the heat equation}\label{Sec:Linear}

We use the Bakry-Emery method to prove some results that are slightly weaker than the assertion of Theorem~\ref{Thm:Main2} but the method is of its own independent interest. Except for the precise value of the constant, the following result can be found in~\cite{pre05312043} (also see earlier references therein).
\begin{Lemm}\label{Lem:FirstBis} Let $d\ge1$. Assume that $\Omega$ is a bounded convex domain such that $|\Omega|=1$. For any $p\in(0,1)$, for any $u\in \H^1(\Omega)$ such that $\partial_nu=0$ on $\partial\Omega$, we have
\[\label{Ineq:InterpBE}
\nrm{\nabla u}2^2\ge\lambda_2\,\left[\nrm u{2}^2-\nrm u{p+1}^2\right]\,.
\]
\end{Lemm}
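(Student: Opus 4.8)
The plan is to run the Bakry--Emery (\emph{carr\'e du champ}) method along the heat flow, in the regime $p\in(0,1)$ where the relevant functional is a convex combination of $\L^2$ and $\L^{p+1}$ norms. First I would set $u=v^\beta$ for a suitably chosen exponent $\beta=\beta(p,d)$, or equivalently work directly with $u$ and introduce a pressure-type variable; since the flow here is linear (the heat equation $\partial_t u=\Delta u$ with Neumann boundary conditions), the cleanest route is to take $u(t,\cdot)$ solving $\partial_t u=\Delta u$, $\partial_n u=0$ on $\partial\Omega$, starting from a smooth positive initial datum, and to study the functional
\[
\mathcal F[u]:=\frac1{1-p}\,\Big[\nrm u2^2-\nrm u{p+1}^2\Big]
\]
along the flow. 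The key identity is that $-\frac{d}{dt}\mathcal F[u(t,\cdot)]$ equals, up to a positive factor, $\nrm{\nabla u}2^2$ at $t=0$ (modulo normalization of $\nrm u{p+1}$), so the inequality reduces to showing that the quantity being differentiated decays, i.e. that a second derivative in $t$ of an auxiliary quantity has a sign.

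The heart of the argument is the standard $\Gamma_2$ computation. I would compute $\frac{d}{dt}\nrm{\nabla u}2^2=-2\intg{|\Delta u|^2}$ and then compare $\intg{|\Delta u|^2}$ with $\intg{|\mathrm{Hess}\,u|^2}$ using Lemma~\ref{Lem:Convex} (this is exactly where convexity of $\Omega$ enters, via the boundary term $-\sum_{i,j}\dig{\partial^2_{ij}u\,\partial_iu\,n_j}\ge0$). Combined with the pointwise Cauchy--Schwarz bound $|\mathrm{Hess}\,u|^2\ge\frac1d(\Delta u)^2$ and the Poincar\'e inequality in the form $\lambda_2\nrm{\nabla u}2^2\le\nrm{\Delta u}2^2$ from Lemma~\ref{Lem:LinInterp}, one gets a differential inequality showing that $t\mapsto\nrm{\nabla u(t)}2^2$ decays at least like $e^{-2\lambda_2 t}$ while controlling the nonlinear $\L^{p+1}$ term: the precise bookkeeping shows that the derivative of the deficit $\nrm{\nabla u}2^2-\lambda_2[\nrm u2^2-\nrm u{p+1}^2]$ along the flow is $\le 0$, and since at $t=+\infty$ the solution converges to its (constant) average where the deficit vanishes, the deficit is nonnegative at $t=0$. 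Passing from smooth positive data to general $u\in\H^1(\Omega)$ is done by density and truncation.

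The step I expect to be the main obstacle is the nonlinear term $\nrm u{p+1}^2$ and its interaction with the $\Gamma_2$ estimate: unlike the purely quadratic case, differentiating $\nrm u{p+1}^{p+1}$ along the heat flow produces $\intg{u^p\Delta u}=-p\intg{u^{p-1}|\nabla u|^2}$, and one must carefully track the extra weight $u^{p-1}$ (which, since $p<1$, is a \emph{negative} power and hence singular where $u$ is small) and combine it with the Hessian term so that the right combination has a definite sign. Controlling this requires either the substitution $u=v^{1/\beta}$ that absorbs the weight into a modified flow plus a manifestly nonnegative remainder, or a direct integration-by-parts juggling; in either case the choice of $\beta$ (and checking it stays in an admissible range for $p\in(0,1)$) is the delicate point, and it is precisely here that one recovers $\lambda_2$ rather than a worse constant. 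I would also need to justify the regularity needed to apply Lemma~\ref{Lem:Convex} along the flow (parabolic smoothing makes $u(t,\cdot)\in C^3(\overline\Omega)$ for $t>0$, so one proves the inequality for $u(t)$ and lets $t\to0^+$), and to ensure positivity is preserved so that $u^{p-1}$ makes sense — both handled by the maximum principle for the Neumann heat equation.
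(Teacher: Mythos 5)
Your proposal follows the same \emph{carr\'e du champ} / heat-flow strategy as the paper, but as written it has a genuine gap, and it sits exactly where you yourself flag ``the delicate point''. The key identity you build on is false for the flow you choose: if $u$ itself solves $\partial_t u=\Delta u$ with Neumann conditions, then
\[
\frac d{dt}\Big[\nrm u2^2-\nrm u{p+1}^2\Big]=-\,2\,\nrm{\nabla u}2^2+2\,p\,\nrm u{p+1}^{1-p}\intg{u^{p-1}\,|\nabla u|^2}\,,
\]
which is not a multiple of $\nrm{\nabla u}2^2$. The identity you want (entropy production proportional to the Dirichlet energy of $u$) holds only when the heat flow is applied to $v=u^{p+1}$, i.e.\ when $u$ evolves by the nonlinear equation $\partial_t u=\Delta u+p\,|\nabla u|^2/u$; this is the paper's choice, with $r=2/(p+1)$ forced and no exponent left to tune. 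In that setting $\frac d{dt}\nrm{\nabla u}2^2$ is no longer $-\,2\,\nrm{\Delta u}2^2$: one gets $-\,\frac12\,\frac d{dt}\intg{|\nabla u|^2}=\intg{|\Delta u|^2}+p\intg{\frac{|\nabla u|^4}{u^2}}-2\,p\intg{\mathrm{Hess}\,u\!:\!\frac{\nabla u\otimes\nabla u}u}$, and the whole content of the proof is the rearrangement of the right-hand side as $(1-p)\intg{|\Delta u|^2}+p\intg{\big(|\Delta u|^2-|\mathrm{Hess}\,u|^2\big)}+p\intg{\big|\mathrm{Hess}\,u-\frac1u\,\nabla u\otimes\nabla u\big|^2}$: here $p<1$ makes the first coefficient positive, Lemma~\ref{Lem:Convex} (convexity) disposes of the middle term, and Lemma~\ref{Lem:LinInterp} converts $(1-p)\intg{|\Delta u|^2}$ into $(1-p)\,\lambda_2\intg{|\nabla u|^2}$, which, compared with the entropy production $\frac4r\intg{|\nabla u|^2}$, yields exactly the constant $\lambda_2$. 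You defer precisely this absorption of the weight $u^{p-1}$, so the decisive algebra is missing; moreover the pointwise bound $|\mathrm{Hess}\,u|^2\ge\frac1d\,(\Delta u)^2$ you invoke plays no role in this lemma (it belongs to the traceless refinement of Lemma~\ref{Lem:Second}).

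Two further remarks. If you insist on the linear flow on $u$, the deficit is indeed nonincreasing, but by different bookkeeping than the one you sketch: $\frac d{dt}\nrm{\nabla u}2^2=-\,2\,\nrm{\Delta u}2^2\le-\,2\,\lambda_2\,\nrm{\nabla u}2^2$ by Lemma~\ref{Lem:LinInterp}, $\frac d{dt}\nrm u2^2=-\,2\,\nrm{\nabla u}2^2$, and $\frac d{dt}\nrm u{p+1}^2\le0$ because $p>0$ and $u(t,\cdot)>0$ for $t>0$; summing, $\frac d{dt}\big(\nrm{\nabla u}2^2-\lambda_2\,\nrm u2^2+\lambda_2\,\nrm u{p+1}^2\big)\le0$ and the limit as $t\to\infty$ is zero. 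Note that this variant uses neither the Hessian comparison nor the convexity of $\Omega$, so your assertion that convexity enters ``exactly'' through Lemma~\ref{Lem:Convex} in your computation is misplaced: convexity is needed in the paper's route (the nonlinear change of variables), not in the linear one. As it stands, however, your argument rests on an identity that does not hold and postpones the step that actually produces the inequality, so it does not yet constitute a proof.
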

In this section and in the next section, we are going to use the \emph{carr\'e du champ} method of D.~Bakry and M.~Emery in two different ways. Our goal is to prove that the functional $\mathcal J_\Lambda$ defined by~\eqref{JLambda} is nonnegative for some specific value of $\Lambda>0$.
\\
$\bullet$ In the \emph{parabolic} perspective, we will consider a flow $t\mapsto u(t,\cdot)$ and prove that
\[
\frac d{dt}\mathcal J_\Lambda[u(t,\cdot)]\le-\,\beta^2\,\mathcal R[u(t,\cdot)]
\]
for some non-zero parameter $\beta$ and some nonnegative functional $\mathcal R$. Since the flow drives the solutions towards constant functions, for which $\mathcal J_\Lambda$ takes the value $0$, we henceforth deduce that
\[
\mathcal J_\Lambda[u(t,\cdot)]\ge\lim_{s\to+\infty}\mathcal J_\Lambda[u(s,\cdot)]=0\quad\forall\,t\ge0\,.
\]
As a consequence, $\mathcal J_\Lambda[u_0]\ge0$ holds true for any initial datum $u(t=0,\cdot)=u_0\in\H^1(\Omega)$, which establishes the inequality. This approach has the advantage to provide for free a remainder term, since we know that
\[
\mathcal J_\Lambda[u_0]\ge\beta^2\int_0^{+\infty}\mathcal R[u(t,\cdot)]\,dt\,.
\]
The main disadvantage of the parabolic point of view is that it relies on the existence of a global and smooth enough solution. At least, this is compensated by the fact that one can take the initial datum as smooth as desired, prove the inequality and argue by density in $\H^1(\Omega)$. Such issues are somewhat standard and have been commented, for instance, in~\cite{MR2459454}.
\\
$\bullet$ Alternatively, one can adopt an \emph{elliptic} perspective. Since, as we shall see, \hbox{$\mathcal R[u]=0$} holds if and only if $u$ is constant on $\Omega$, it is enough to consider an optimal function for $\mathcal J_\Lambda$, which is known to exist by standard compactness methods for any exponent $p$ in the subcritical range, or even a positive critical point. The case of the critical exponent is more subtle, but can also be dealt with using techniques of the calculus of variations. An extremal function $u$ solves an Euler-Lagrange equation, which can be tested by a perturbation corresponding to the direction given by the flow. From a formal viewpoint, this amounts to take the solution to the flow problem with initial datum $u_0$ and to compute $\frac d{dt}\mathcal J_\Lambda[u(t,\cdot)]$ at \hbox{$t=0$}. However, no existence theory for the evolution equation is required and one can rely on the additional regularity properties that the function $u\in\H^1(\Omega)$ inherits as a solution to the Euler-Lagrange equation. The elliptic regularity theory \emph{\`a la} de Giorgi-Nash-Moser is also somewhat standard but requires some care. The interested reader is invited to refer, for instance, to~\cite{DEL2015} for further details on the application of this strategy, or to~\cite{1603} for a more heuristic introduction to the method.

In practice, we will use the two pictures without further notice. Detailed justifications and adaptations are left to the reader. Algebraically, in terms of integration by parts or tensor manipulations, the two methods are equivalent, and we shall focus on the these computations, which explain why the method works but also underlines its limitations.

\begin{proof}[Proof of Lemma~\ref{Lem:FirstBis}] We give a proof based on the entropy -- entropy production method. It is enough to prove the result for nonnegative functions $u$ since the inequality for $|u|$ implies the inequality for $u$. By density, we may assume that $u$ is smooth. According to~\cite{pre05312043}, if $v$ is a nonnegative solution of the heat equation
\[
\frac{\partial v}{\partial t}=\Delta v
\]
on $\Omega$ with homogeneous Neumann boundary conditions, then $v=u^{p+1}$ is such that
\[
\frac d{dt}\intg{\frac{v^r-M^r}{r-1}}=-\frac4r\intg{|\nabla u|^2}
\]
with $M:=\intg v$ and $r=2/(p+1)$. With this change of variables, $u$ solves
\[
\frac{\partial u}{\partial t}=\Delta u+\frac{2-r}r\,\frac{|\nabla u|^2}u
\]
and we find that
\begin{multline}\label{Id:BE}
-\,\frac12\,\frac d{dt}\intg{|\nabla u|^2}\\
=\intg{|\Delta u|^2}\,+\,p\intg{\frac{|\nabla u|^4}{u^2}}-2\,p\intg{\mathrm{Hess}\,u\!:\!\frac{\nabla u\otimes\nabla u}u}\,,
\end{multline}
that is,
\begin{multline*}
-\,\frac12\,\frac d{dt}\intg{|\nabla u|^2}=2\,\frac{r-1}r\intg{|\Delta u|^2}+p\intg{\(|\Delta u|^2-|\mathrm{Hess}\,u|^2\)}\\
+p\intg{\Big|\mathrm{Hess}\,u-\frac1u\,\nabla u\otimes\nabla u\Big|^2}\,,
\end{multline*}
and finally, using Lemma~\ref{Lem:Convex},
\[
\frac d{dt}\intg{|\nabla u|^2}\le-\,4\,\frac{r-1}r\intg{|\Delta u|^2}\le-\,4\,\frac{r-1}r\,\lambda_2\intg{|\nabla u|^2}
\]
where the last inequality follows from Lemma~\ref{Lem:LinInterp}, Ineq.~\eqref{Ineq:First}, thus proving the result for any $p=(2-r)/r\in(0,1)$. Indeed, with previous notations, we have shown that $\intg{|\nabla u|^2}$ is exponentially decaying. Hence
\[
\intg{\frac{v^r-M^r}{r-1}}=\frac1{r-1}\(\nrm u2^2-\nrm u{p+1}^2\)
\]
also converges to $0$ as $t\to\infty$ and
\[
\frac d{dt}\left[\nrm{\nabla u}2^2-\mu\intg{\frac{v^r-M^r}{r-1}}\right]\le\(-\,4\,\frac{r-1}r\,\lambda_2+\frac4r\,\mu\)\intg{|\nabla u|^2}
\]
is nonpositive if $\mu\le(r-1)\,\lambda_2$. Altogether, we have shown that
\[
\nrm{\nabla u}2^2-\lambda_2\(\nrm u2^2-\nrm u{p+1}^2\)
\]
is nonincreasing with limit $0$, which concludes the proof.\end{proof}

If $d\ge2$, better result can be obtained by considering the traceless quantities as in~\cite{Dolbeault20141338}. Let us introduce
\begin{eqnarray}\label{M-L}
&&\mathrm M[u]:=\frac{\nabla u\otimes\nabla u}u-\frac1d\,\frac{|\nabla u|^2}u\,\mathrm{Id}\,,\label{M}\\
&&\mathrm L\,u:=\mathrm{Hess}\,u-\frac1d\,\Delta u\,\mathrm{Id}\,,\label{L}
\end{eqnarray}
and define $p^\sharp:=\frac{d\,(d+2)}{(d-1)^2}$ so that
\[
\vartheta(p,d):=p\,\frac{(d-1)^2}{d\,(d+2)}
\]
satisfies $\vartheta(p,d)<1$ for any $p\in(0,p^\sharp)$. Notice that $p^\sharp+1=\frac{2\,d^2+1}{(d-1)^2}$ is the threshold value that has been found in~\cite{MR808640} (also see~\cite{DEKL2012,DEKL}).
\begin{Lemm}\label{Lem:Second}Let $d\ge2$. Assume that $\Omega$ is a bounded convex domain such that $|\Omega|=1$. For any $p\in(0,p^\sharp)$, for any $u\in\H^1(\Omega)$ such that $\partial_nu=0$ on $\partial\Omega$, we have
\[
\nrm{\nabla u}2^2\ge\frac12\,\big(1-\vartheta(p,d)\big)\,\lambda_2\,\frac{\nrm u{p+1}^2-\nrm u2^2}{p-1}
\]
if $p\neq1$ and, in the limit case $p=1$,
\[
\nrm{\nabla u}2^2\ge\frac14\,\big(1-\vartheta(1,d)\big)\,\lambda_2\intg{|u|^2\,\log\(\frac{|u|^2}{\nrm u2^2}\)}\,.
\]
\end{Lemm}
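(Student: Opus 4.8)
The plan is to run the \emph{carr\'e du champ} computation along the same fast-diffusion-type flow used in the proof of Lemma~\ref{Lem:FirstBis}, but to keep the traceless Hessian term instead of discarding it, following the strategy of~\cite{Dolbeault20141338}. As before, it suffices to treat smooth nonnegative $u$ and then argue by density; we set $v=u^{p+1}$, $r=2/(p+1)$, and recall that along the heat flow for $v$ the function $u$ solves $\partial_t u=\Delta u+\tfrac{2-r}{r}\,|\nabla u|^2/u$ and that $-\tfrac12\,\tfrac d{dt}\intg{|\nabla u|^2}$ equals the right-hand side of~\eqref{Id:BE}. The first step is to rewrite that right-hand side using the traceless decompositions~\eqref{M}--\eqref{L}: write $|\mathrm{Hess}\,u|^2=|\mathrm L\,u|^2+\tfrac1d(\Delta u)^2$, expand $\mathrm{Hess}\,u:\tfrac{\nabla u\otimes\nabla u}u=\mathrm L\,u:\mathrm M[u]+\tfrac1d\,\tfrac{|\nabla u|^2}u\,\Delta u$, and express $\intg{|\nabla u|^4/u^2}$ through $|\mathrm M[u]|^2=\tfrac{|\nabla u|^4}{u^2}-\tfrac1d\tfrac{|\nabla u|^4}{u^2}$, i.e. $|\mathrm M[u]|^2=\tfrac{d-1}{d}\,\tfrac{|\nabla u|^4}{u^2}$. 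After collecting the terms proportional to $(\Delta u)^2$, using Lemma~\ref{Lem:Convex} in the form $\intg{(\Delta u)^2}\ge\intg{|\mathrm{Hess}\,u|^2}=\intg{|\mathrm L\,u|^2}+\tfrac1d\intg{(\Delta u)^2}$, and completing the square in the remaining $\mathrm L\,u$ and $\mathrm M[u]$ terms, the goal is to arrive at an identity of the schematic shape
\[
-\,\frac12\,\frac d{dt}\intg{|\nabla u|^2}\ge\Big(\frac{r-1}{r}+\text{(positive multiple of }p\text{)}\Big)\intg{(\Delta u)^2}+(\text{nonnegative square terms}),
\]
where the square terms are a positive combination of $\bigl|\mathrm L\,u-\tfrac\beta u\,(\nabla u\otimes\nabla u)^{\rm traceless}\bigr|^2$-type quantities, with $\beta$ chosen so that the cross terms cancel. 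The coefficient $\vartheta(p,d)=p\,(d-1)^2/(d(d+2))$ should emerge precisely as the factor by which the effective multiple of $\intg{(\Delta u)^2}$ is reduced relative to the naive bound, which is why the condition $\vartheta(p,d)<1$, i.e. $p<p^\sharp$, is exactly what is needed for the square terms to have nonnegative coefficients.

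The second step is to convert the resulting differential inequality into the functional inequality. Once we have
\[
-\,\frac d{dt}\intg{|\nabla u|^2}\ge 2\,\frac{r-1}{r}\,\big(1-\vartheta(p,d)\big)^{-1}\cdot(\text{something})\ \ge\ c\,\lambda_2\intg{|\nabla u|^2}
\]
with $c=\tfrac12(1-\vartheta(p,d))\cdot(\text{the same numerical factor as in Lemma~\ref{Lem:FirstBis}})$ — here one again invokes Lemma~\ref{Lem:LinInterp}, inequality~\eqref{Ineq:First}, to pass from $\intg{(\Delta u)^2}$ to $\lambda_2\intg{|\nabla u|^2}$ — one deduces exponential decay of $\intg{|\nabla u|^2}$, hence convergence of $\tfrac1{r-1}(\nrm u2^2-\nrm u{p+1}^2)$ to $0$, and then checks that $\nrm{\nabla u}2^2-\Lambda\,\tfrac{\nrm u{p+1}^2-\nrm u2^2}{p-1}$ is monotone along the flow for $\Lambda=\tfrac12(1-\vartheta(p,d))\,\lambda_2$, exactly as in the proof of Lemma~\ref{Lem:FirstBis}. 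Note that since $\tfrac{\nrm u{p+1}^2-\nrm u2^2}{p-1}$ is the same quantity for $p<1$ and $p>1$ (both equal to $\varepsilon(p)\,|{\cdot}|\ge0$ up to sign bookkeeping), the computation covers both regimes at once, provided $p<p^\sharp$; the limit $p\to1$ gives the logarithmic Sobolev form since $\tfrac{\nrm u{p+1}^2-\nrm u2^2}{p-1}\to\tfrac12\intg{|u|^2\log(|u|^2/\nrm u2^2)}$, with the extra factor $\tfrac12$ accounting for the $\tfrac14$ versus $\tfrac12$ in the statement.

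The main obstacle I anticipate is purely in the first step: getting the algebra of the traceless tensors to close with the \emph{sharp} coefficient $\vartheta(p,d)$ rather than some weaker constant. The delicate point is that there are two nonnegative ``square'' contributions available — one built from $\mathrm L\,u$ and one from the difference $\mathrm L\,u-\text{(traceless part of }\mathrm M[u])$ — and one must split the mixed term $-2p\int \mathrm{Hess}\,u:\tfrac{\nabla u\otimes\nabla u}u$ optimally between them, using the boundary inequality of Lemma~\ref{Lem:Convex} to supply the $|\mathrm L\,u|^2$ that is consumed. Choosing the split to maximize the surviving coefficient of $\intg{(\Delta u)^2}$ is an elementary but fiddly optimization (essentially a quadratic-form positivity condition in the coefficients), and it is here that the precise form of $p^\sharp=d(d+2)/(d-1)^2$ and of $\vartheta(p,d)$ is forced; the rest of the argument is a routine repetition of the flow argument already carried out for Lemma~\ref{Lem:FirstBis}.
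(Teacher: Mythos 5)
Your plan follows the paper's proof essentially step for step: the same heat-flow \emph{carr\'e du champ} computation as in Lemma~\ref{Lem:FirstBis} starting from~\eqref{Id:BE}, the same traceless decomposition~\eqref{M}--\eqref{L} with a completion of squares, the same split of $\intg{(\Delta u)^2}$ by a parameter $\theta$ combined with Lemma~\ref{Lem:Convex} and inequality~\eqref{Ineq:First} (the "fiddly optimization" you anticipate is exactly the choice $\theta=\vartheta(p,d)$ that kills the $|\mathrm L\,u|^2$ term), and the same passage to the limit $p\to1$, with your bookkeeping of the factors $\tfrac12$ and $\tfrac14$ correct. The only ingredient left implicit in your outline is the integration by parts identity $\intg{\Delta u\,\tfrac{|\nabla u|^2}u}=\intg{\tfrac{|\nabla u|^4}{u^2}}-2\intg{\mathrm{Hess}\,u\!:\!\tfrac{\nabla u\otimes\nabla u}u}$, which the paper uses to express the mixed term through $\mathrm L\,u\!:\!\mathrm M[u]$ and $|\mathrm M[u]|^2$ before completing the square; this is part of the routine algebra you defer and does not change the approach.
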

The range of $p$ covered in Lemma~\ref{Lem:Second} is larger than the range covered in Lemma~\ref{Lem:FirstBis}, but the constant is also better if $p\in\big(d\,(d+2)/(d^2+6\,p-1),1\big)$ because, in that case, $(1-\vartheta(p,d))/(p-1)>2$.

\begin{proof} We use the same conventions as in the proof of Lemma~\ref{Lem:FirstBis}. Let us first observe that
\begin{eqnarray*}
&&|\mathrm M[u]|^2=\(1-\frac1d\)\frac{|\nabla u|^4}{u^2}\,,\\
&&|\mathrm L\,u|^2=|\mathrm{Hess}\,u|^2-\frac1d\,(\Delta u)^2\,.\\
\end{eqnarray*}
Since $\mathrm{Hess}\,u=\mathrm L\,u+\frac1d\,\Delta u\,\mathrm{Id}$, we have that
\[
\mathrm{Hess}\,u\!:\!\frac{\nabla u\otimes\nabla u}u=\mathrm L\,u\!:\!\frac{\nabla u\otimes\nabla u}u+\frac1d\,\Delta u\,\frac{|\nabla u|^2}u=\mathrm L\,u\!:\!\mathrm M[u]+\frac1d\,\Delta u\,\frac{|\nabla u|^2}u
\]
because $\mathrm L\,u$ is traceless. An integration by parts shows that
\begin{multline*}
\intg{\Delta u\,\frac{|\nabla u|^2}u}=\intg{\frac{|\nabla u|^4}{u^2}}-\,2\intg{\mathrm{Hess}\,u\!:\!\frac{\nabla u\otimes\nabla u}u}\\
=\frac d{d-1}\intg{|\mathrm M[u]|^2}-\,2\intg{\mathrm{Hess}\,u\!:\!\frac{\nabla u\otimes\nabla u}u}
\end{multline*}
so that we get
\[
\frac{d+2}d\intg{\mathrm{Hess}\,u\!:\!\frac{\nabla u\otimes\nabla u}u}=\intg{\mathrm L\,u\!:\!\mathrm M[u]}+\frac1{d-1}\intg{|\mathrm M[u]|^2}\,,
\]
hence
\begin{multline*}
\intg{\mathrm{Hess}\,u\!:\!\frac{\nabla u\otimes\nabla u}u}=\frac d{d+2}\intg{\mathrm L\,u\!:\!\mathrm M[u]}\\
+\frac d{(d-1)\,(d+2)}\intg{|\mathrm M[u]|^2}\,.
\end{multline*}
Now let us come back to the proof of Lemma~\ref{Lem:FirstBis}. From~\eqref{Id:BE}, we read that
\begin{eqnarray*}
&&\kern-40pt-\,\frac12\,\frac d{dt}\intg{|\nabla u|^2}\\&\kern-5pt=&\kern-5pt\intg{|\Delta u|^2}+p\intg{\frac{|\nabla u|^4}{u^2}}-2\,p\intg{\mathrm{Hess}\,u\!:\!\frac{\nabla u\otimes\nabla u}u}\\
&\kern-5pt=&\kern-5pt\intg{|\Delta u|^2}+\frac{p\,d}{d-1}\intg{|\mathrm M[u]|^2}\\
&\kern-5pt&\kern-5pt-\,2\,p\(\frac d{d+2}\intg{\mathrm L\,u\!:\!\mathrm M[u]}+\frac d{(d-1)\,(d+2)}\intg{|\mathrm M[u]|^2}\)\\
&\kern-5pt=&\kern-5pt\intg{|\Delta u|^2}-p\,\frac{d-1}{d+2}\intg{|\mathrm L\,u|^2}\\
&\kern-5pt&\kern-5pt+\,\frac{p\,d^2}{(d-1)\,(d+2)}\intg{\left|\mathrm M[u]-\frac{d-1}d\,\mathrm L\,u\right|^2}\,.
\end{eqnarray*}
We know from Lemma~\ref{Lem:Convex} that
\[
\intg{(\Delta u)^2}\ge\intg{|\mathrm{Hess}\,u|^2}=\intg{|\mathrm L\,u|^2}+\frac1d\intg{(\Delta u)^2}\,,
\]
\emph{i.e.},
\[
\intg{(\Delta u)^2}\ge\frac d{d-1}\intg{|\mathrm L\,u|^2}\,.
\]
Altogether, this proves that, for any $\theta\in(0,1)$,
\[
-\,\frac12\,\frac d{dt}\intg{|\nabla u|^2}\ge(1-\theta)\intg{|\Delta u|^2}+\(\frac{\theta\,d}{d-1}-p\,\frac{d-1}{d+2}\)\intg{|\mathrm L\,u|^2}
\]
and finally, with $\theta=\vartheta(p,d)$ and using~\eqref{Ineq:First},
\[
\frac d{dt}\left[\nrm{\nabla u}2^2-\mu\intg{\frac{v^r-M^r}{r-1}}\right]\le\(-\,(1-\theta)\,\lambda_2+\frac4r\,\mu\)\intg{|\nabla u|^2}
\]
is nonpositive if
\[
\mu\le\frac r4\,\big(1-\vartheta(p,d)\big)\,\lambda_2=\frac{1-\vartheta(p,d)}{2\,(p+1)}\,\lambda_2\,.
\]
Since $r-1=(1-p)/(1+p)$, this concludes the proof if $p\neq1$. The case $p=1$ is obtained by passing to the limit as $p\to1$.\end{proof}

\section{Estimates based on nonlinear diffusion equations}\label{Sec:Nonlinear}

\begin{Lemm}\label{Lem:Lower} Assume that $d\ge2$ and $\Omega$ is a bounded convex domain in $\R^d$ with smooth boundary such that $|\Omega|=1$. Then we have
\[
\frac{1-\theta_\star(p,d)}{|p-1|}\,\lambda_2\le\mu_1\,.
\]
\end{Lemm}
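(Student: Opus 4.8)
The plan is to establish the equivalent rigidity statement, namely that every positive solution $u$ of \eqref{Eq3} equals the constant $\lambda^{1/(p-1)}$ whenever $\lambda<\frac{1-\theta_\star(p,d)}{|p-1|}\,\lambda_2$; this is precisely $\mu_1\ge\frac{1-\theta_\star(p,d)}{|p-1|}\,\lambda_2$. So I would fix a positive solution $u$ of \eqref{Eq3}: the strong maximum principle keeps $u$ bounded away from $0$, and elliptic regularity ($\L^\infty$ bounds \`a la de Giorgi--Nash--Moser, then Schauder) gives $u\in C^3(\overline\Omega)$, so every integration by parts below, and in particular the use of Lemma~\ref{Lem:Convex}, is licit (one may instead regularize and pass to the limit, cf.~Section~\ref{Sec:Linear}). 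I would run the computation for $p\neq1$, recover $p=1$ (the logarithmic Sobolev inequality \eqref{LogSob}) by letting $p\to1$, and keep the factor $\varepsilon(p)$ so as to treat $p>1$ and $p<1$ at once. The essential change with respect to Section~\ref{Sec:Linear} is to replace the heat flow by a genuinely nonlinear diffusion: I would test \eqref{Eq3} against the direction of a one-parameter family of flows in a pressure variable, $\partial_t u=u^{2(1-\beta)}\big(\Delta u+\kappa\,\tfrac{|\nabla u|^2}{u}\big)$ with $\kappa=\kappa(p,\beta)$, which reduces to the normalization of Section~\ref{Sec:Linear} for one particular value of $\beta$ but now has $\beta>0$ left \emph{free}.

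Repeating the computation behind \eqref{Id:BE} with this weight, the boundary terms are either annihilated by $\partial_nu=0$ or are the one controlled via convexity in Lemma~\ref{Lem:Convex}, leaving an identity of the shape
\[
-\tfrac12\,\tfrac{d}{dt}\intg{|\nabla u|^2}=\intg{|\Delta u|^2}+\mathsf a(\beta)\intg{\tfrac{|\nabla u|^4}{u^2}}-\mathsf b(\beta)\intg{\mathrm{Hess}\,u\!:\!\tfrac{\nabla u\otimes\nabla u}{u}}\,,
\]
with $\mathsf a,\mathsf b$ explicit in $(p,d,\beta)$. Using the traceless-tensor identities from the proof of Lemma~\ref{Lem:Second} --- $|\mathrm M[u]|^2=(1-\tfrac1d)\tfrac{|\nabla u|^4}{u^2}$, the integration-by-parts relation expressing $\intg{\mathrm{Hess}\,u\!:\!\tfrac{\nabla u\otimes\nabla u}{u}}$ through $\intg{|\mathrm M[u]|^2}$ and $\intg{\mathrm L\,u\!:\!\mathrm M[u]}$, and $\intg{(\Delta u)^2}\ge\tfrac{d}{d-1}\intg{|\mathrm L\,u|^2}$ from Lemma~\ref{Lem:Convex} --- I would rewrite the right-hand side, for any $\theta\in(0,1)$, as
\[
\theta\intg{|\Delta u|^2}+\Big(\tfrac{(1-\theta)\,d}{d-1}-\mathsf c(\beta)\Big)\intg{|\mathrm L\,u|^2}+\mathsf d(\beta)\intg{\big|\mathrm M[u]-\gamma(\beta)\,\mathrm L\,u\big|^2}\,,
\]
with $\mathsf d(\beta)\ge0$. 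Keeping the middle term nonnegative forces $\mathsf c(\beta)\le\tfrac{d}{d-1}$, a constraint which, once $\beta$ is tuned, is met for all $p<2^*-1$; this is how the admissible range of exponents, only $p<p^\sharp$ in Lemma~\ref{Lem:Second}, is pushed up to the critical exponent.

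To conclude I would insert Lemma~\ref{Lem:LinInterp}, $\intg{(\Delta u)^2}\ge\lambda_2\intg{|\nabla u|^2}$, and compare with the decay of the entropy $\mathcal E[u]=\intg{\tfrac{v^r-M^r}{r-1}}$ --- the analogue, for this flow, of the quantity in Section~\ref{Sec:Linear}, which for the admissible parameter choice is a fixed multiple of $\varepsilon(p)\,\big[\,\nrm u2^2-\nrm u{p+1}^2\,\big]$ and satisfies $\tfrac{d}{dt}\mathcal E[u]=-\,\mathrm{const}\cdot\intg{|\nabla u|^2}$ --- obtaining $\tfrac{d}{dt}\big[\nrm{\nabla u}2^2-\mu\,\mathcal E[u]\big]\le0$ as soon as $\mu$ lies below $\theta\,\lambda_2$ times an explicit constant. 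Maximizing this admissible $\mu$ jointly over $\beta$ and $\theta$ is a short calculus exercise governed by a quadratic discriminant --- the ``$+p$'' in the denominator of $\theta_\star$ being exactly the bonus of the nonlinear flow over the heat flow --- and it yields the threshold $\frac{1-\theta_\star(p,d)}{|p-1|}\,\lambda_2$, with $\theta_\star$ as in \eqref{thetastar}. Finally, for $\lambda$ below this threshold $\nrm{\nabla u}2^2-|p-1|\,\lambda\,\mathcal E[u]$ is monotone along the flow with limit $0$, while $u$ solving \eqref{Eq3} makes its flow-derivative vanish; hence $\intg{|\nabla u|^2}$, $\intg{|\mathrm L\,u|^2}$ and $\intg{\big|\mathrm M[u]-\gamma\,\mathrm L\,u\big|^2}$ all vanish, so $\nabla u\equiv0$ and $u$ is constant, which is the claim.

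I expect the real difficulty to lie in the algebra rather than the architecture: carrying the coefficients $\mathsf a,\mathsf b,\mathsf c,\mathsf d,\gamma$ correctly through the weighted integrations by parts (the weight $u^{2(1-\beta)}$ generates extra interior $\tfrac{|\nabla u|^4}{u^2}$- and Hessian-type terms that must be reabsorbed), and verifying that the two-parameter optimization indeed lands on \eqref{thetastar} while the compatibility $\mathsf c(\beta)\le\tfrac{d}{d-1}$ persists over the whole range $p\in(0,1)\cup(1,2^*-1)$. A lesser, more routine obstacle is to make the manipulations rigorous --- regularity and positivity of $u$ together with a density argument in the elliptic picture, or global existence, positivity and enough regularity of the nonlinear-diffusion flow in the parabolic one.
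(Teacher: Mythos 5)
Your proposal is correct in outline and is essentially the paper's own argument: the same change of unknown $u=v^\beta$ (your ``pressure variable''), the same traceless quantities $\mathrm L$, $\mathrm M$ and sum-of-squares rearrangement, Lemma~\ref{Lem:Convex} for the boundary term, Lemma~\ref{Lem:LinInterp} for the spectral gap, and the $(\theta,\beta)$ optimization whose discriminant produces $\theta_\star$ --- the paper simply runs it at the elliptic level, multiplying \eqref{Eq3bis} by $\Delta v+\kappa\,|\nabla v|^2/v$ with the explicit choices $\beta=(d+2)/(d+2-p)$ and $\theta=\theta_\star(p,d)$, deferring the free-parameter flow version to Proposition~\ref{Prop:mu}. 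The one bookkeeping point to repair is that the clean unweighted dissipation identity holds for $\nrm{\nabla(v^\beta)}2^2$, i.e.\ for $\mathcal J_\Lambda[v^\beta]$ along \eqref{flow} (equivalently, for the elliptic multiplier computation), not for the plain Fisher information $\nrm{\nabla u}2^2$ of the flow variable as in your displayed identity, and accordingly both the $\lambda$-term and Lemma~\ref{Lem:LinInterp} must be applied in the $v$ variable --- which is precisely what the substitution $u=v^\beta$ buys.
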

\begin{proof} This bound is inspired from~\cite{MR1412446,MR1338283,MR1631581,Dolbeault20141338}. Let us give the main steps of the proof. Here we do it at the level of the nonlinear elliptic PDE. Flows will be introduced afterwards, with the intent of providing improvements.

Let us consider the solution $u$ to~\eqref{Eq3} and define a function $v$ such that $v^\beta=u$ for some exponent $\beta$ to be chosen later. Then $v$ solves
\be{Eq3bis}
-\,\varepsilon(p)\(\Delta v+(\beta-1)\,\frac{|\nabla v|^2}v\)+\lambda\,v-v^\kappa=0\quad\mbox{in}\quad\Omega
\ee
with homogeneous Neumann boundary conditions
\be{NBC}
\partial_nv=0\quad\mbox{on}\quad\partial\Omega\,.
\ee
Here
\be{kappa}
\kappa=\beta\,(p-1)+1\,.
\ee
If we multiply the equation by $\(\Delta v+\kappa\,|\nabla v|^2/v\)$ and integrate by parts, then the nonlinear term disappears and we are left with the identity
\begin{multline*}
\intg{(\Delta v)^2}+(\kappa+\beta-1)\intg{\Delta v\,\frac{|\nabla v|^2}v}+\kappa\,(\beta-1)\intg{\frac{|\nabla v|^4}{v^2}}\\
-\lambda\,|p-1|\intg{|\nabla v|^2}=0\,.
\end{multline*}
Using~\eqref{M}-\eqref{L}, let us define
\[
\mathrm Q[v]:=\mathrm L\,v-\frac{(d-1)\,(p-1)}{\theta\,(d+3-p)}\,\mathrm M[v]\,.
\]
The case of a compact manifold has been dealt with in~\cite{Dolbeault20141338}. The main difference is that there is no Ricci curvature in case of a domain in $\R^d$, but one has to take into account the boundary terms. As in the proof of Lemma~\ref{Lem:Second}, the main idea is to rearrange the various terms as a sum of squares of traceless quantities. The computations for $v$ are very similar to those done in Section~\ref{Sec:Linear}, so we shall skip the details. The reader is invited to check that
\begin{multline*}
\theta\(\intg{(\Delta v)^2}-\intg{|\mathrm{Hess}\,v|^2}\)+\frac{\theta\,d}{d-1}\intg{|\mathrm Q[v]|^2}\\
+(1-\theta)\intg{(\Delta v)^2}-\lambda\,|p-1|\intg{|\nabla v|^2}=0
\end{multline*}
if $\theta=\theta_\star(p,d)=\frac{(d-1)^2\,p}{d\,(d+2)+p}$ and $\beta=\frac{d+2}{d+2-p}$. In the previous identity, the first term is nonnegative by Lemma~\ref{Lem:Convex}, the second term is the integral of a square and is therefore nonnegative, and the sum of the last ones is positive according to Lemma~\ref{Lem:LinInterp} if $(1-\theta)\intg{(\Delta v)^2}-\lambda\,|p-1|\int_\Omega|\nabla v|^2>0$, unless $\nabla v=0$ a.e. Notice that the convexity of $\Omega$ is required to apply Lemma~\ref{Lem:Convex}.

We may notice that $p=d+2$ has to be excluded in order to define $\beta$, and this may occur if $d=2$. However, by working directly on $u$, it is possible to cover this case as well. This is indeed purely technical, because of the change of variables $u=v^\beta$. Alternatively, it is enough to observe that the inequality holds for any $p\neq d+2$ and argue by continuity with respect to~$p$.\end{proof}

\begin{proof}[Proof of Theorem~\ref{Thm:Main2}] Since the exponent $p$ is in the sub-critical range, it is classical that the functional $\mathcal J_\Lambda$ has a minimizer $u$. Up to a normalization $v=u^{1/\beta}$ solves~\eqref{Eq3bis}. If $\Lambda=\lambda\,|p-1|<\Lambda_\star$, then $u$ is constant by Lemma~\ref{Lem:Lower}, and we are therefore in the case $\lambda=\mu(\lambda)$ of Theorem~\ref{Thm:Main1} if $\lambda\le\frac{1-\theta_\star(p,d)}{|p-1|}\,\lambda_2$. Combined with the results of Theorem~\ref{Thm:Main1} and Lemma~\ref{Lem:Upper}, this completes the proof of Theorem~\ref{Thm:Main2}.

\end{proof}

\medskip For later purpose (see Section~\ref{Sec:Improved}), let us consider the proof based on the flow. With $\lambda=\Lambda/|p-1|$, we may consider the functional $u\mapsto\mathcal J_\Lambda[u]$ defined by~\eqref{JLambda} with $u=v^\beta$ and evolve it according to
\be{flow}
\frac{\partial v}{\partial t}=v^{2-2\,\beta} \(\Delta v+\kappa\,\frac{|\nabla v|^2}v\)\,.
\ee
We also assume that~\eqref{NBC} hold for any $t\ge0$. This flow has the nice property that
\[
\frac d{dt}\intg{u^{p+1}}=\frac d{dt}\intg{v^{\beta\,(p+1)}}=0
\]
if $\kappa$ is given by~\eqref{kappa}, and a simple computation shows that
\begin{multline*}
-\frac1{\beta^2}\,\frac d{dt}\mathcal J_\Lambda[v^\beta]=\intg{(\Delta v)^2}+(\kappa+\beta-1)\intg{\Delta v\,\frac{|\nabla v|^2}v}\\
+\kappa\,(\beta-1)\intg{\frac{|\nabla v|^4}{v^2}}-\lambda\,|p-1|\intg{|\nabla v|^2}=0\,.
\end{multline*}
The same choices of $\beta$ and $\theta$ as in the proof of Lemma~\ref{Lem:Lower} allow us to conclude, but it is interesting to discuss the possible values of $\beta$ and $\theta$ which guarantee that $\frac d{dt}\mathcal J_\Lambda[v^\beta]\le0$ unless $v$ is a constant. As in~\cite{Dolbeault20141338}, elementary computations show that
\begin{multline*}
-\frac1{\beta^2}\,\frac d{dt}\mathcal J_\Lambda[v^\beta]\\
=\theta\(\intg{(\Delta v)^2}-\intg{|\mathrm{Hess}\,v|^2}\)+\frac{\theta\,d}{d-1}\intg{|\mathrm Q[v]|^2}\\
+R\intg{\frac{|\nabla v|^4}{v^2}}+(1-\theta)\intg{(\Delta v)^2}-\lambda\,|p-1|\intg{|\nabla v|^2}
\end{multline*}
where $\mathrm Q[u]$ is now defined by 
\[
\mathrm Q[u]:=\mathrm L\,u-\frac1\theta\,\frac{d-1}{d+2}\,(\kappa+\beta-1)\left[\frac{\nabla u\otimes\nabla u}u-\frac1d\,\frac{|\nabla u|^2}u\,\mathrm{Id}\right]
\]
and
\[
R:=-\frac1\theta\(\frac{d-1}{d+2}\)^2(\kappa+\beta-1)^2+\kappa\,(\beta-1)+(\kappa+\beta-1)\,\frac d{d+2}\,.
\]
After replacing $\kappa$ by its value according to~\eqref{kappa}, we obtain that the equation
\[
0=R=\left[\(\frac{d-1}{d+2}\)^2\frac{p^2}\theta-p+1\right]\,\beta^2-2\(1-\frac p{d+1}\)\beta+1
\]
has two roots $\beta_\pm(\theta,p,d)$ if $\theta\in\big(\theta_\star(p,d),1\big)$ and $R>0$ if $\beta\in(\beta_-,\beta_+)$. As in the linear case (proof of Lemma~\ref{Lem:Second}), we also know from Lemma~\ref{Lem:Convex} that
\[
\intg{(\Delta v)^2}-\intg{|\mathrm{Hess}\,v|^2}\ge0
\]
and this is precisely where we take into account boundary terms and use the assumption that $\Omega$ is convex. Summarizing, we arrive at the following result.
\begin{Prop}\label{Prop:mu} With the above notations, if $\Omega$ is a bounded convex domain such that $|\Omega|=1$, for any $\theta\in\big(\theta_\star(p,d),1\big)$ and any $\beta\in\big(\beta_-(\theta,p,d),\beta_+(\theta,p,d)\big)$, we have
\[
\frac d{dt}\mathcal J_\Lambda[v^\beta]\le-\,R\,\beta^2\intg{\frac{|\nabla v|^4}{v^2}}
\]
if $v$ is a solution to~\eqref{flow}.\end{Prop}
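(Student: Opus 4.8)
The plan is to differentiate $\mathcal J_\Lambda[v^\beta]$ along the flow~\eqref{flow}, and to reorganize the outcome as a sum of manifestly nonnegative integrals plus the term $R\intg{|\nabla v|^4/v^2}$, so that only the sign and size of $R$ remain to be discussed. First I would note that the exponent $\kappa$ of~\eqref{kappa} is precisely the one for which $\intg{v^{\beta\,(p+1)}}$ is conserved along~\eqref{flow}, hence $\nrm{v^\beta}{p+1}$ is constant in time and the only contributions to $\frac d{dt}\mathcal J_\Lambda[v^\beta]$ come from $\nrm{\nabla v^\beta}2^2$ and $\nrm{v^\beta}2^2$. Writing $u=v^\beta$, inserting $\partial_t v=v^{2-2\,\beta}\big(\Delta v+\kappa\,|\nabla v|^2/v\big)$ and integrating by parts (the boundary terms drop out thanks to~\eqref{NBC}) then yields the identity
\[
-\frac1{\beta^2}\,\frac d{dt}\mathcal J_\Lambda[v^\beta]=\intg{(\Delta v)^2}+(\kappa+\beta-1)\intg{\Delta v\,\frac{|\nabla v|^2}v}+\kappa\,(\beta-1)\intg{\frac{|\nabla v|^4}{v^2}}-\lambda\,|p-1|\intg{|\nabla v|^2}\,,
\]
which is the parabolic analogue of the elliptic identity used in the proof of Lemma~\ref{Lem:Lower}; this step is routine.

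Next I would recast the two cubic-in-gradient terms through the traceless tensors $\mathrm L\,v$ and $\mathrm M[v]$ of~\eqref{M}--\eqref{L}, exactly as in the proof of Lemma~\ref{Lem:Second}. The ingredients are $|\mathrm M[v]|^2=(1-1/d)\,|\nabla v|^4/v^2$, the integration by parts $\intg{\Delta v\,|\nabla v|^2/v}=\intg{|\nabla v|^4/v^2}-2\intg{\mathrm{Hess}\,v\!:\!(\nabla v\otimes\nabla v)/v}$, and the identity $\intg{\mathrm{Hess}\,v\!:\!(\nabla v\otimes\nabla v)/v}=\tfrac d{d+2}\intg{\mathrm L\,v\!:\!\mathrm M[v]}+\tfrac d{(d-1)(d+2)}\intg{|\mathrm M[v]|^2}$. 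After substitution the right-hand side is a quadratic form in $\mathrm L\,v$, $\mathrm M[v]$, $\Delta v$ and $\nabla v$; completing the square in $\mathrm L\,v$ against $\mathrm M[v]$ produces the term $\frac{\theta\,d}{d-1}\intg{|\mathrm Q[v]|^2}$ with $\mathrm Q$ as defined above, and bookkeeping of the constants (after replacing $\kappa$ by~\eqref{kappa}) forces the residual coefficient of $\intg{|\nabla v|^4/v^2}$ to be exactly the quantity $R=R(\theta,\beta,p,d)$ displayed before the statement. Using $\intg{|\mathrm{Hess}\,v|^2}=\intg{|\mathrm L\,v|^2}+\tfrac1d\intg{(\Delta v)^2}$, the remaining second-order part organizes into $\theta\big(\intg{(\Delta v)^2}-\intg{|\mathrm{Hess}\,v|^2}\big)+(1-\theta)\intg{(\Delta v)^2}$, which gives the displayed decomposition of $-\beta^{-2}\,\frac d{dt}\mathcal J_\Lambda[v^\beta]$.

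Finally I would conclude term by term. The bracket $\intg{(\Delta v)^2}-\intg{|\mathrm{Hess}\,v|^2}$ is nonnegative by Lemma~\ref{Lem:Convex}, and this is the only point where the convexity of $\Omega$ enters; the coefficient $\theta\,d/(d-1)$ is positive since $\theta>\theta_\star(p,d)>0$, so $\frac{\theta\,d}{d-1}\intg{|\mathrm Q[v]|^2}\ge0$; and, using Lemma~\ref{Lem:LinInterp} in the form $\intg{(\Delta v)^2}\ge\lambda_2\intg{|\nabla v|^2}$ together with the bound $\Lambda=\lambda\,|p-1|\le(1-\theta)\,\lambda_2$ under which this discussion is carried out, one gets $(1-\theta)\intg{(\Delta v)^2}-\lambda\,|p-1|\intg{|\nabla v|^2}\ge0$. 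Hence $-\beta^{-2}\,\frac d{dt}\mathcal J_\Lambda[v^\beta]\ge R\intg{|\nabla v|^4/v^2}$, which is the asserted inequality; the restriction $\beta\in\big(\beta_-(\theta,p,d),\beta_+(\theta,p,d)\big)$ is used only to ensure $R>0$, so that $-R\,\beta^2\intg{|\nabla v|^4/v^2}$ is a genuine dissipation term, $R$ being the quadratic polynomial in $\beta$ whose two roots are $\beta_\pm$. The main obstacle is the coefficient bookkeeping in the second step: one must track the constants carefully so that the cross term $\intg{\mathrm L\,v\!:\!\mathrm M[v]}$ packages cleanly into $\intg{|\mathrm Q[v]|^2}$ and the leftover gradient term emerges with coefficient precisely $R$; everything else reduces to Lemmas~\ref{Lem:LinInterp} and~\ref{Lem:Convex}.
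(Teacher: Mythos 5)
Your proposal follows essentially the same route as the paper: the same time-derivative identity along the flow~\eqref{flow} using the conservation of $\intg{v^{\beta\,(p+1)}}$, the same reorganization in terms of the traceless tensors $\mathrm L\,v$ and $\mathrm M[v]$ with the completed square $|\mathrm Q[v]|^2$ and residual coefficient $R$, and the same final use of Lemma~\ref{Lem:Convex} for the Hessian bracket and of Lemma~\ref{Lem:LinInterp} for the remaining second-order terms, with $\beta\in\big(\beta_-,\beta_+\big)$ serving only to make $R>0$. The only difference is that you state explicitly the condition $\lambda\,|p-1|\le(1-\theta)\,\lambda_2$ needed to discard $(1-\theta)\intg{(\Delta v)^2}-\lambda\,|p-1|\intg{|\nabla v|^2}$, a constraint the paper leaves implicit in ``with the above notations'' and realizes through the choice of $\Lambda$ in its application.
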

When $\theta=\theta_\star(p,d)$, the reader is invited to check that $\beta_-=\beta_+=\beta$. The computations in the proof of Lemma~\ref{Lem:Lower} can now be reinterpreted in the framework of the flow defined by~\eqref{flow}. Up to the change of unknown function $u=v^\beta$, any solution to~\eqref{Eq3} is stationary with respect to~\eqref{flow} and then all  our computations amount to write that $\frac d{dt}\mathcal J_\Lambda[v^\beta]=0$ is possible only if $v$ is a constant.

\section{Further considerations}\label{Sec:further}

Let us conclude this paper by a series of remarks. Section~\ref{Sec:Threshold} is devoted to the question of the non-optimality in the lower bound of Theorem~\ref{Thm:Main2}. Spectral methods are introduced in Section~\ref{Sec:Nelson} and provide us with an alternative method to establish~\eqref{GenInterpIneq} with $p\in(0,1)$ when the constants in the extremal cases $p=0$ (Poincar\'e inequality) and $p=1$ (logarithmic Sobolev inequality) are known. The last estimates of Section~\ref{Sec:Improved} are based  on refinements of the nonlinear flow method and extend the case of the manifolds with positive curvature studied in~\cite{DEKL} to the setting of a bounded convex domain with homogeneous Neumann boundary conditions.

\subsection{The threshold case}\label{Sec:Threshold}

The following result complements those of Theorem~\ref{Thm:Main2}.
\begin{Prop}\label{Prop:Threshold} With $\theta_\star(p,d)$ defined by~\eqref{thetastar} and $\Lambda_\star$ given as the best constant in~\eqref{GenInterpIneq}, if $\Omega$ is a bounded convex domain such that $|\Omega|=1$, we have that
\[
\big[1-\theta_\star(p,d)\big]\,\lambda_2<\Lambda_\star\le\lambda_2\,.
\]
\end{Prop}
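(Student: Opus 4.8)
The plan is to prove Proposition~\ref{Prop:Threshold} by combining the strict inequality already implicit in the proof of Lemma~\ref{Lem:Lower} with the upper bound from Lemma~\ref{Lem:Upper}. The upper bound $\Lambda_\star\le\lambda_2$ is nothing new: it is exactly the content of Lemma~\ref{Lem:Upper}, which produces a test function making $\mathcal J_\Lambda$ negative as soon as $\Lambda>\lambda_2$, hence $\Lambda_\star\le\lambda_2$. So the real work is the \emph{strict} lower bound $\big[1-\theta_\star(p,d)\big]\,\lambda_2<\Lambda_\star$, i.e.\ upgrading the non-strict inequality $\mu_2=\Lambda_\star/|p-1|\ge\frac{1-\theta_\star(p,d)}{|p-1|}\,\lambda_2$ of Theorem~\ref{Thm:Main2} to a strict one.

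First I would revisit the key identity established in the proof of Lemma~\ref{Lem:Lower}: for $v$ a solution of~\eqref{Eq3bis} with the optimal choices $\theta=\theta_\star(p,d)$ and $\beta=\frac{d+2}{d+2-p}$,
\[
\theta\(\intg{(\Delta v)^2}-\intg{|\mathrm{Hess}\,v|^2}\)+\frac{\theta\,d}{d-1}\intg{|\mathrm Q[v]|^2}+(1-\theta)\intg{(\Delta v)^2}-\lambda\,|p-1|\intg{|\nabla v|^2}=0\,.
\]
If $\lambda=\frac{1-\theta_\star(p,d)}{|p-1|}\,\lambda_2$, then by Ineq.~\eqref{Ineq:First} the last two terms combine into something $\ge 0$, and all three groups of terms are nonnegative; hence each must vanish. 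From $\intg{|\mathrm Q[v]|^2}=0$ and $(1-\theta)\big(\intg{(\Delta v)^2}-\lambda_2\intg{|\nabla v|^2}\big)=0$ one concludes that either $\nabla v\equiv 0$ (so $u$ is constant), or $v$ realizes equality in~\eqref{Ineq:First} \emph{and} $\mathrm Q[v]=0$ pointwise. The plan is to show these two conditions are incompatible for a non-constant $v$: equality in Lemma~\ref{Lem:LinInterp}, Ineq.~\eqref{Ineq:First}, forces $v$ to be (an affine function of) the eigenfunction $u_2$, for which $\Delta v$ is proportional to $v$ and in particular $\mathrm{Hess}\,v$ need not be traceless-zero, whereas $\mathrm Q[v]=0$ together with $\mathrm M[v]$ being a rank-one-type traceless tensor forces strong pointwise structure on $\nabla v\otimes\nabla v$ that cannot hold on an open set unless $\nabla v$ vanishes. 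Hence for this borderline $\lambda$, $u$ is still forced to be constant, which means rigidity holds \emph{strictly beyond} $\frac{1-\theta_\star(p,d)}{|p-1|}\,\lambda_2$; combined with the definition of $\mu_1\le\mu_2=\Lambda_\star/|p-1|$, this yields the strict inequality $\big[1-\theta_\star(p,d)\big]\,\lambda_2<\Lambda_\star$.

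Alternatively, and perhaps more cleanly, I would argue by the flow/perturbation picture of Proposition~\ref{Prop:mu}: for $\theta$ slightly above $\theta_\star(p,d)$ the interval $(\beta_-,\beta_+)$ is nonempty and $R>0$ strictly, so the decay estimate $\frac{d}{dt}\mathcal J_\Lambda[v^\beta]\le -R\,\beta^2\intg{|\nabla v|^4/v^2}$ is \emph{strict} unless $v$ is constant; tracking the dependence of the admissible $\mu=\Lambda/|p-1|$ on $\theta$ and letting $\theta\downarrow\theta_\star(p,d)$ shows that the bound $\mu_2\ge\frac{1-\theta_\star(p,d)}{|p-1|}\,\lambda_2$ cannot be saturated, because at $\theta=\theta_\star$ one still has an extra nonnegative term $\frac{\theta d}{d-1}\intg{|\mathrm Q[v]|^2}$ whose vanishing, jointly with equality in Lemma~\ref{Lem:LinInterp}, only the constants can achieve. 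Either route reduces to the same analytic core.

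The main obstacle I anticipate is precisely the rigidity-of-the-equality-case analysis: showing that a non-constant function cannot \emph{simultaneously} saturate the convexity inequality of Lemma~\ref{Lem:Convex} (equivalently have $\intg{(\Delta v)^2}=\intg{|\mathrm{Hess}\,v|^2}$), saturate the spectral inequality~\eqref{Ineq:First}, and satisfy $\mathrm Q[v]=0$ a.e. Each condition alone has a nontrivial solution set; the claim is that their intersection contains only constants. I expect this to follow from a careful ODE-type argument along trajectories of $\nabla v$ or from observing that $\mathrm Q[v]=0$ forces $\mathrm{Hess}\,v$ to be a specific rank-one perturbation of a multiple of $\mathrm{Id}$, which is incompatible with $v$ being a Laplacian eigenfunction unless $\nabla v\equiv 0$; handling the boundary terms and the low-regularity of a priori merely $\H^1$ extremals (via elliptic regularity \emph{à la} de Giorgi–Nash–Moser, as already invoked in Section~\ref{Sec:Nonlinear}) is the technical part that needs care but is otherwise standard.
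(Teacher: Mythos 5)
Your reduction is right that the only new content is the strict lower bound (the upper bound is Lemma~\ref{Lem:Upper}), but the key step of your plan is a non sequitur: even if you carried out the equality-case analysis and proved rigidity \emph{at} the endpoint $\lambda_0=\frac{1-\theta_\star(p,d)}{|p-1|}\,\lambda_2$, this would not yield $\Lambda_\star>\big[1-\theta_\star(p,d)\big]\,\lambda_2$. Rigidity at $\lambda=\lambda_0$ only says that $\lambda_0$ itself carries no nonconstant positive solution of~\eqref{Eq3}; it is perfectly compatible with $\mu_2=\Lambda_\star/|p-1|=\lambda_0$, because by Lemmas~\ref{Lem:1} and~\ref{Lem:2} the set where $\mu(\lambda)=\lambda$ is the \emph{closed} interval $(0,\mu_2]$, and nonconstant optimizers are only guaranteed for $\lambda>\mu_2$. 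So ``$u$ is constant at the borderline, hence rigidity holds strictly beyond'' does not follow, and the strict inequality for $\Lambda_\star$ is not obtained. What is actually needed, and what the paper's proof supplies, is a limiting/linearization argument: assume $\Lambda_\star=[1-\theta_\star(p,d)]\,\lambda_2$, take nontrivial extremals (either an optimizer of~\eqref{GenInterpIneq} evolved by the flow~\eqref{flow}, along which $\mathcal J_\Lambda$ remains equal to $0$, or, alternatively, nonconstant optimizers $u_\lambda$ for $\lambda>\Lambda_\star$), let them approach a constant, and linearize: for a small mean-zero perturbation $w$ of a constant, $\mathcal J_\Lambda\sim\nrm{\nabla w}2^2-\Lambda\,\nrm w2^2\ge(\lambda_2-\Lambda)\,\nrm w2^2=\theta_\star(p,d)\,\lambda_2\,\nrm w2^2>0$, contradicting $\mathcal J_\Lambda=0$. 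The strictness thus comes from the spectral gap $\theta_\star(p,d)\,\lambda_2>0$ and a bifurcation-type argument, not from the equality cases of Lemma~\ref{Lem:Convex}, of~\eqref{Ineq:First} and of $\mathrm Q[v]=0$; nothing strict is ``implicit'' in the proof of Lemma~\ref{Lem:Lower}.

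Two further points. The analytic core of your route --- that a nonconstant $v$ cannot simultaneously saturate~\eqref{Ineq:First} (hence be a constant plus a $\lambda_2$-eigenfunction), satisfy $\mathrm Q[v]=0$ a.e.\ and saturate Lemma~\ref{Lem:Convex} --- is only sketched and acknowledged as the main obstacle, so even the endpoint rigidity is not established; and your alternative via Proposition~\ref{Prop:mu} does not work as stated, since for $\theta>\theta_\star(p,d)$ the strict decay only proves the inequality with the \emph{smaller} constant $(1-\theta)\,\lambda_2$, and letting $\theta\downarrow\theta_\star(p,d)$ recovers nothing more than the non-strict bound. To repair your argument, add the missing step: if $\Lambda_\star=[1-\theta_\star(p,d)]\,\lambda_2$, take nonconstant optimizers for $\lambda\downarrow\Lambda_\star/|p-1|$, pass to the limit by elliptic estimates, rule out a nonconstant limit (this is where your endpoint analysis could be used, if completed) and rule out a constant limit by the linearization above.
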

\begin{proof} The proof goes along the same lines as~\cite[Theorem~4]{Dolbeault20141338}. We argue by contradiction and assume first that
\[
\big[1-\theta_\star(p,d)\big]\,\lambda_2=\Lambda_*
\]
and that there is a nontrivial solution to~\eqref{Eq3} for $\lambda\,|p-1|=\Lambda=\Lambda_\star$. Then $\mathcal J_\Lambda[v^\beta]$ is constant with respect to $t$ if $\lambda=\Lambda_\star\,|p-1|$ and $v$ is a solution of~\eqref{flow} with initial datum $v_0$ such that $u=v_0^\beta$ is optimal for the functional inequality~\eqref{GenInterpIneq}. Since the limit of $v(t,\cdot)$ as $t\to\infty$ is a positive constant that can be approximated by the average of $v(t,\cdot)$, then
\[
0=\mathcal J_\Lambda[v(t,\cdot)]\sim\nrm{\nabla w}2^2-\lambda\,\nrm w2^2\ge\theta_\star(p,d)\,\lambda_2\,\nrm w2^2
\]
with $w=v-\intg{v(t,\cdot)}\neq0$, a contradiction. 

Alternatively we can use the elliptic point of view and consider non-trivial optimal functions $u_\lambda$ with $\lambda>\Lambda_\star$. As $\lambda\to\Lambda_\star$, $u_\lambda$ has to converge to a constant and we again reach a contradiction.\end{proof}

\subsection{An interpolation between Poincar\'e and logarithmic Sobolev inequalities}\label{Sec:Nelson}

It is well known that inequalities~\eqref{GenInterpIneq} with $p\in(0,1)$ can be seen as a family of inequalities which interpolate between Poincar\'e and logarithmic Sobolev inequalities. See for instance~\cite{MR1796718}. Next, using the method introduced by W.~Beckner in~\cite{MR954373} in case of a Gaussian measure and later used for instance in \cite{ABD05,DEKL2012}, we are in position to get an estimate of the best constant in~\eqref{GenInterpIneq} for any $p\in(0,1)$ in terms of the best constant at the endpoints $p=0$ and $p=1$. To emphasize the dependence in the optimal constant in $p$, we shall denote it by $\Lambda_\star(p)$ and consistantly use $\Lambda_\star(1)$ as the optimal constant in the logarithmic Sobolev inequality~\eqref{LogSob}. We recall that the optimal constants in~\eqref{GenInterpIneq} are such that
\[
\Lambda_\star(p)\le\Lambda_\star(0)=\lambda_2
\]
for any $p\in(0,2^*-1)$, including in the case $p=1$ of the logarithmic Sobolev inequality. This can be checked easily as in the proof of Lemma~\ref{Lem:Upper} by using $u=1+\epsilon\,u_2$ as a test function, where $u_2$ is an eigenfunction associated with~$\lambda_2$, and by taking the limit as $\epsilon\to0$.
\begin{Prop}\label{Prop:4} Assume that $p\in(0,1)$ and $d\ge 1$. Then we have the estimate
\[
\Lambda_\star(p)\ge\frac{1-p}{1-p^\alpha}\,\lambda_2\quad\mbox{with}\quad\alpha=\frac{\lambda_2}{\Lambda_\star(1)}\,.
\]
 \end{Prop}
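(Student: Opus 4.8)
The plan is to follow Beckner's interpolation trick, exploiting the fact that the inequalities \eqref{GenInterpIneq} form a monotone family in $p$ when written in the right normalization. First I would fix a nonnegative $u\in\H^1(\Omega)$ with $\intg{u^2}=1$ and introduce, for $s\in[0,1]$, the functional $\varphi(s):=\intg{u^{1+s}}$ (or a logarithmic variant), so that $s=1$ corresponds to the $\L^2$-mass, $s=p$ recovers $\nrm u{p+1}^2$ after the normalization, and the derivative at $s=0$ produces the entropy $\intg{u^2\log u^2}$ appearing in the logarithmic Sobolev inequality \eqref{LogSob}. The key structural input is that $s\mapsto\varphi(s)$ is convex (a consequence of Hölder/Jensen: $t\mapsto\intg{u^{1+t}}$ is log-convex, hence convex, on the relevant range when $\intg{u^2}=1$), which lets me bound the ``intermediate'' quantity at $s=p$ by an interpolation between its value and its derivative at the endpoint $s=0$.

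The second step is to feed in the two endpoint inequalities. At $p=0$ we have the Poincaré inequality $\nrm{\nabla u}2^2\ge\lambda_2\,(\nrm u2^2-\bar u^2)$ with constant $\Lambda_\star(0)=\lambda_2$, and at $p=1$ we have \eqref{LogSob} with optimal constant $\Lambda_\star(1)$. Writing $\alpha=\lambda_2/\Lambda_\star(1)$, the convexity estimate on $\varphi$ combined with these two facts should yield, after taking into account that $\frac{1}{p-1}\big(\nrm u{p+1}^2-\nrm u2^2\big)$ is, up to the normalization, exactly $\frac{1}{1-p}\big(1-\varphi(p)\big)$, a lower bound of the form
\[
\nrm{\nabla u}2^2\ge\frac{1-p}{1-p^\alpha}\,\lambda_2\,\frac{\nrm u{p+1}^2-\nrm u2^2}{p-1}\,,
\]
which is precisely the claimed estimate $\Lambda_\star(p)\ge\frac{1-p}{1-p^\alpha}\,\lambda_2$. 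Concretely, the chain would be: apply the logarithmic Sobolev inequality to $u^{\gamma}$ for a well-chosen power $\gamma=\gamma(p)$, rewrite $\nrm{\nabla u^\gamma}2^2$ in terms of $\nrm{\nabla u}2^2$ using the diffusion/chain-rule identity already used repeatedly in Section~\ref{Sec:Linear}, and then recognize the entropy of $u^\gamma$ as controlling $1-\varphi(p)$ via the convexity bound; optimizing (or rather, the natural choice forced by the algebra) in $\gamma$ produces the factor $\frac{1-p}{1-p^\alpha}$ with the exponent $\alpha$ exactly equal to $\lambda_2/\Lambda_\star(1)$.

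The main obstacle I anticipate is not any single estimate but getting the bookkeeping of normalizations and powers exactly right so that the exponent in $1-p^\alpha$ comes out as $\alpha=\lambda_2/\Lambda_\star(1)$ rather than some other combination: the substitution $u\mapsto u^\gamma$ changes both the gradient term (by a factor involving $\gamma^2$ and a nonlinear correction that must be absorbed, which is exactly where convexity of $\Omega$ is \emph{not} needed here since no Hessian enters — only the scalar chain rule) and the nonlinear term, and one must check that the resulting inequality, after dividing through, is monotone in $\gamma$ in the right direction so that the endpoint $\gamma\to$ (the value matching $p$) gives the stated constant. Once the endpoint logarithmic Sobolev constant $\Lambda_\star(1)$ is plugged in and the convexity of $t\mapsto\intg{u^{1+t}}$ is invoked to pass from the differential information at $p=1$ (equivalently at $s=0$ after reparametrization) to the finite value at a general $p$, the inequality follows; the details are routine and parallel to \cite[Section~2]{DEKL2012} and \cite{MR954373}, so I would only sketch them.
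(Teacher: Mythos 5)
There is a genuine gap, and it sits exactly at the point you defer as ``routine''. The paper's proof is dynamic, not static: following Gross~\cite{Gross75}, one runs the Neumann heat flow $\partial_t f=\Delta f$ with $f(0,\cdot)=u$, differentiates $t\mapsto\nrm{f(t,\cdot)}{Q(t)}$, and applies the logarithmic Sobolev inequality~\eqref{LogSob} to $v=|f(t,\cdot)|^{Q(t)/2}$ at \emph{every} time $t$; choosing $Q$ with $Q(0)=p+1$ and $Q(t_*)=2$ forces $p=e^{-2\,\Lambda_\star(1)\,t_*}$ and yields the hypercontractivity estimate $\nrm{f(t_*,\cdot)}2\le\nrm u{p+1}$. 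Then a decomposition of $u$ in Neumann eigenfunctions, $a_k=\nrm{u_k}2^2$, gives $\nrm u2^2-\nrm{f(t_*,\cdot)}2^2=\sum_{k\ge2}a_k\big(1-e^{-2\lambda_k t_*}\big)\le\frac{1-e^{-2\lambda_2 t_*}}{\lambda_2}\,\nrm{\nabla u}2^2$ (since $\lambda\mapsto(1-e^{-2\lambda t_*})/\lambda$ is decreasing), and because $e^{-2\lambda_2 t_*}=p^{\lambda_2/\Lambda_\star(1)}=p^\alpha$ the stated constant drops out. This is what ``Beckner's method''~\cite{MR954373} means here: the exponent $\alpha$ is a ratio of decay rates along one and the same semigroup, and it cannot be produced by a one-shot H\"older/convexity interpolation.

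Concretely, your sketch fails at two points. First, there is no chain-rule identity converting $\nrm{\nabla(u^\gamma)}2^2=\gamma^2\intg{u^{2(\gamma-1)}|\nabla u|^2}$ into a multiple of $\nrm{\nabla u}2^2$ for $\gamma\ne1$ and general $u$; in Gross's argument the powers are applied to the evolved function $f(t,\cdot)$, the Dirichlet forms $\intg{|\nabla f^{Q(t)/2}|^2}$ are generated by the flow itself, and the factor $p^\alpha$ arises from integrating a differential inequality over $[0,t_*]$, which a single application of~\eqref{LogSob} to a fixed power $u^\gamma$ cannot reproduce. Second, the static route you describe (log-convexity of $s\mapsto\intg{u^{1+s}}$ plus the two endpoint inequalities) only yields weights depending on $p$, and what it actually delivers is the Lata{\l}a--Oleszkiewicz-type monotonicity of $p\mapsto\frac{\nrm u2^2-\nrm u{p+1}^2}{1-p}$ (see~\cite{MR1796718}), hence $\Lambda_\star(p)\ge\Lambda_\star(1)$; this is strictly weaker than $\frac{1-p}{1-p^\alpha}\,\lambda_2$ whenever $\Lambda_\star(1)<\lambda_2$, because $\alpha\,(1-p)\ge1-p^\alpha$ for $\alpha\ge1$. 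Your side remark that convexity of $\Omega$ is not needed for this proposition is correct, but as written the argument does not establish the claimed bound.
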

\begin{proof} Let us briefly sketch the proof which is based on two main steps. 

\medskip\noindent\emph{$1^{\rm st}$ step: Nelson's hypercontractivity result.\/} Based on the strategy of L.~Gross in~\cite{Gross75}, we first establish an optimal hypercontractivity result using~\eqref{LogSob}. On $\Omega$, let us consider the heat equation
\[
\frac{\partial f}{\partial t}=\Delta f
\]
with initial datum $f(t=0,\cdot)=u$, Neumann homogeneous boundary conditions and let $F(t):=\nrm {f(t,\cdot)}{Q(t)}$. The key computation goes as follows.
\begin{multline*}
\frac{F'}F=\frac d{dt}\,\log F(t)=\frac d{dt}\,\left[\frac 1{Q(t)}\,\log\(\intg{|f(t,\cdot)|^{Q(t)}}\)\right]\\
=\frac{Q'}{Q^2\,F^Q}\left[\intg{v^2\log\(\frac{v^2}{\intg{v^2}}\)}+4\,\frac{Q-1}{Q'}\,\intg{|\nabla v|^2}\right]
\end{multline*}
with $v:=|f|^{Q(t)/2}$. Assuming that $4\,\frac{Q-1}{Q'}=\frac 2{\Lambda_\star(1)}$, we find that
\[
\log\(\frac{Q(t)-1}p\)=2\,\Lambda_\star(1)\,t
\]
if we require that $Q(0)=p+1$. Let $t_*>0$ be such that $Q(t_*)=2$. As a consequence of the above computation, we observe that $F$ is non increasing by the logarithmic Sobolev inequality~\eqref{LogSob} and get that
\be{Ineq:Nelson}
\nrm{f(t_*,\cdot)}2\le\nrm up\quad\mbox{if}\quad\frac 1p=e^{2\,\Lambda_\star(1)\,t_*}\;.
\ee

\medskip\noindent\emph{$2^{\rm nd}$ step: Spectral decomposition.\/} Let $u=\sum_{k\ge1}u_k$ be a decomposition of the initial datum on the eigenspaces of $-\Delta$ with Neumann boundary conditions and denote by $\lambda_k$ the ordered sequence of the eigenvalues: $-\Delta u_k=\lambda_k\,u_k$. Let $a_k=\nrm{u_k}2^2$. As a straightforward consequence of this decomposition, we know that $\nrm u2^2=\sum_{k\ge1}a_k$, $\nrm{\nabla u}2^2=\sum_{k\ge1}\lambda_k\,a_k$,
\[
\nrm{f(t_*,\cdot)}2^2=\sum_{k\ge1}a_k\,e^{-2\,\lambda_k\,t_*}\;.
\]
Using~\eqref{Ineq:Nelson}, it follows that
\[
\frac{\nrm u2^2-\nrm up^2}{1-p}\le\frac{\nrm u2^2-\nrm{f(t_*,\cdot)}2}{1-p}
\]
where the right hand side can be rewritten as
\begin{multline*}
\frac 1{1-p}\sum_{k\ge2}\lambda_k\,a_k\,\frac{1-e^{-2\,\lambda_k\,t_*}}{\lambda_k}\le\frac{1-e^{-2\,\lambda_2\,t_*}}{(1-p)\,\lambda_2}\sum_{k\ge2}\lambda_k\,a_k\\
=\frac{1-e^{-2\,\lambda_2\,t_*}}{(1-p)\,\lambda_2}\,\nrm{\nabla u}2^2\,.
\end{multline*}\end{proof}

Notice that the estimate of Proposition~\ref{Prop:4} allows us to recover the optimal values $\lambda_2$ and $\Lambda_\star(1)$ when passing to the limit in $\frac{1-p}{1-p^\alpha}\,\lambda_2$ as $p\to0$ and $p\to1$ respectively. Hence any improvement on the estimate of $\Lambda_\star(1)$ automatically produces an improvement upon the lower estimate in Theorem~\ref{Thm:Main2} at least in a neighborhood of $p=1_-$.

\subsection{Improvements based on the nonlinear flow}\label{Sec:Improved}
Let us define the exponent
\[
\delta:=\frac{p+1+\,\beta\,(p-3)}{2\,\beta\,(p-1)}\,.
\]
Improvements of (\ref{GenInterpIneq}) can be obtained as in \cite{Demange-PhD,DEKL}, using the following interpolation lemma.
\begin{Lemm}\label{Lem:Demange} Assume that $\beta>1$, and $\beta\le\frac2{3-p}$ if $p<3$. For any $u=v^\beta\in\H^1(\Omega)$ such that $\nrm u{p+1}=1$, we have
\[
\intg{\frac{|\nabla v|^4}{v^2}}\ge\frac1{\beta^2}\,\frac{\intg{|\nabla u|^2}\,\intg{|\nabla v|^2}}{\(\intg{u^2}\)^\delta}\,.
\]
\end{Lemm}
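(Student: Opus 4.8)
The plan is to prove the pointwise (or rather integrated) inequality
\[
\intg{\frac{|\nabla v|^4}{v^2}}\ge\frac1{\beta^2}\,\frac{\intg{|\nabla u|^2}\,\intg{|\nabla v|^2}}{\(\intg{u^2}\)^\delta}
\]
by recognizing it as a consequence of the Cauchy--Schwarz inequality applied to a suitable factorization of the integrand of $\intg{|\nabla u|^2}$. First I would rewrite everything in terms of $v$: since $u=v^\beta$, we have $\nabla u=\beta\,v^{\beta-1}\,\nabla v$, hence $|\nabla u|^2=\beta^2\,v^{2\beta-2}\,|\nabla v|^2$, so that $\intg{|\nabla u|^2}=\beta^2\intg{v^{2\beta-2}\,|\nabla v|^2}$. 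The target inequality then becomes, after clearing $\beta^2$,
\[
\intg{v^{2\beta-2}\,|\nabla v|^2}\cdot\(\intg{v^{\beta(p+1)}}\)^{-\delta/?}\ \le\ \Big(\text{something}\Big)\,,
\]
so the real content is to interpolate the ``middle'' weight $v^{2\beta-2}$ between the weight $v^{-2}$ (appearing in $\intg{|\nabla v|^4/v^2}$, if we think of $|\nabla v|^2/v$ as one factor) and the weight $v^{0}$ (appearing in $\intg{|\nabla v|^2}$), with the discrepancy absorbed by a power of $\intg{u^2}=\intg{v^{2\beta}}$ via Hölder.

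Concretely, I would write $v^{2\beta-2}|\nabla v|^2 = \big(v^{-1}|\nabla v|^2\big)^{a}\cdot\big(|\nabla v|^2\big)^{b}\cdot v^{c}$ for exponents $a,b,c$ chosen so that $a+b=1$ (to reproduce $|\nabla v|^2$ to the first power) and $-a+c=2\beta-2$ (to reproduce the weight). Then apply Hölder's inequality with three factors: the first factor against $\L^{1/a}$, the second against $\L^{1/b}$, and the pure power $v^c$ against the conjugate Lebesgue exponent. This yields
\[
\intg{v^{2\beta-2}|\nabla v|^2}\le\(\intg{\frac{|\nabla v|^4}{v^2}}\)^{a}\(\intg{|\nabla v|^2}\)^{b}\(\intg{v^{c/(1-a-b)}}\)^{1-a-b}.
\]
Since we want $a+b=1$, the last factor degenerates; instead I should split $\intg{|\nabla v|^2}$ itself or, more cleanly, use a two-factor Hölder where the ``loss'' $v^{2\beta-2}$ is compared with the geometric mean of $v^{-1}$ and $v^{0}$ weighted so the remainder is exactly a multiple of $\beta$ that matches $\intg{v^{2\beta}}$. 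The arithmetic that fixes $a$, and hence forces the exponent $\delta=\frac{p+1+\beta(p-3)}{2\beta(p-1)}$ on $\intg{u^2}$, together with the requirement $\delta\ge 0$ (equivalently $\beta\le\frac{2}{3-p}$ when $p<3$, $\beta>1$), is the bookkeeping step: one solves $a\cdot(-1)+(1-a)\cdot 0 + (\text{power from }\intg{v^{2\beta}}) = 2\beta-2$ for the Hölder exponents and matches the total degree, checking the exponent on $\intg{v^{2\beta}}$ equals $\delta$.

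The key steps in order: (1) substitute $u=v^\beta$ to convert the claim into a weighted inequality purely in $v$ and $\nabla v$; (2) write the weighted integrand $v^{2\beta-2}|\nabla v|^2$ as a product of powers of $|\nabla v|^4/v^2$, $|\nabla v|^2$, and $v^{2\beta}$ with undetermined exponents; (3) apply Hölder's inequality with the three (or two) conjugate exponents dictated by requiring the first two integrals appear to the first power in the product; (4) solve the resulting linear system for the exponents and verify that the power landing on $\intg{v^{2\beta}}=\intg{u^2}$ is exactly $\delta$, and that the normalization $\nrm u{p+1}=1$ (used to drop or absorb an $\intg{u^{p+1}}$ factor that may appear from the exponent matching) makes the constant precisely $1/\beta^2$; (5) check the admissibility constraints $\beta>1$ and $\beta\le\frac{2}{3-p}$ for $p<3$ ensure all Hölder exponents are $\ge 1$ and $\delta\ge 0$, so the power of $\intg{u^2}$ sits on the correct side.

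The main obstacle I expect is the exponent bookkeeping in steps (2)--(4): getting the three exponents to simultaneously (i) be Hölder-conjugate, (ii) reproduce the weight $v^{2\beta-2}$, (iii) produce the first power of both $\intg{|\nabla v|^4/v^2}$ and $\intg{|\nabla v|^2}$, and (iv) yield \emph{exactly} $\delta$ on $\intg{u^2}$ rather than some other rational function of $\beta$ and $p$. This is a linear-algebra computation with a unique solution, but it is easy to slip a sign or a factor; the sanity check is that at $\beta=\frac{2}{3-p}$ one gets $\delta=0$ and the inequality reduces to a clean two-integral Cauchy--Schwarz statement $\intg{v^{2\beta-2}|\nabla v|^2}\le(\intg{|\nabla v|^4/v^2})^{1/2}(\intg{v^{4\beta-2}|\nabla v|^2})^{1/2}$ with the second factor recognizable again as a multiple of $\intg{|\nabla v|^2}$ after a further substitution — matching the normalization $\nrm u{p+1}=1$. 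I would also point the reader to \cite{Demange-PhD,DEKL} where the analogous computation on manifolds is carried out, since the domain case differs only cosmetically (no boundary term enters this purely algebraic lemma).
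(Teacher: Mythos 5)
Your plan has the right ingredients (H\"older with exponent bookkeeping, the normalizations $\nrm u{p+1}=1$ and $|\Omega|=1$), but as written it does not close, and the central structural idea is missing. The target bounds the \emph{product} of two gradient integrals, $\intg{|\nabla u|^2}\,\intg{|\nabla v|^2}$, by $\beta^2\intg{|\nabla v|^4/v^2}\,\big(\intg{u^2}\big)^\delta$. A single H\"older estimate for $\intg{|\nabla u|^2}=\beta^2\intg{v^{2\beta-2}|\nabla v|^2}$ in which $\intg{|\nabla v|^2}$ appears as one of the factors on the upper-bound side puts that integral on the \emph{wrong} side: after rearranging, $\intg{|\nabla v|^2}$ ends up dividing $\intg{|\nabla u|^2}$ rather than multiplying it. You notice that your three-factor ansatz with $a+b=1$ degenerates, but the fix you gesture at is never carried out, and your fallback ``sanity check'' at $\beta=\tfrac2{3-p}$ rests on a false identification: $\intg{v^{4\beta-2}|\nabla v|^2}$ is proportional to $\intg{|\nabla(v^{2\beta})|^2}$, not to $\intg{|\nabla v|^2}$ (nor to $\intg{|\nabla u|^2}$), so that reduction does not give the claimed inequality.

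The missing idea is to run \emph{two} H\"older estimates, each producing a factor $\big(\intg{|\nabla v|^4/v^2}\big)^{1/2}$, and then multiply them. First, writing $\intg{|\nabla v|^2}=\intg{\tfrac{|\nabla v|^2}v\cdot 1\cdot v}$ and using the exponents $\tfrac12+\tfrac{\beta-1}{2\beta}+\tfrac1{2\beta}=1$ together with $|\Omega|=1$ gives $\intg{|\nabla v|^2}\le\big(\intg{|\nabla v|^4/v^2}\big)^{1/2}\big(\intg{u^2}\big)^{1/(2\beta)}$. Second, writing $\tfrac1{\beta^2}\intg{|\nabla u|^2}=\intg{\tfrac{|\nabla v|^2}v\,v^{\frac{(p+1)(\beta-1)}{p-1}}\,v^{\frac{\beta(p-3)+2}{p-1}}}$ and using $\tfrac12+\tfrac{\beta-1}{\beta(p-1)}+\tfrac{\beta(p-3)+2}{2\beta(p-1)}=1$ together with $\nrm u{p+1}=1$ gives $\tfrac1{\beta^2}\intg{|\nabla u|^2}\le\big(\intg{|\nabla v|^4/v^2}\big)^{1/2}\big(\intg{u^2}\big)^{\frac{\beta(p-3)+2}{2\beta(p-1)}}$. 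Multiplying the two, the half-powers of $\intg{|\nabla v|^4/v^2}$ combine to power one and the exponents on $\intg{u^2}$ add up to $\tfrac1{2\beta}+\tfrac{\beta(p-3)+2}{2\beta(p-1)}=\delta$, which is exactly the statement; the hypotheses $\beta>1$ and $\beta\le\tfrac2{3-p}$ for $p<3$ are what make all the H\"older weights nonnegative. Your proposal never isolates this two-estimate structure, so as it stands it is a plan with a genuine gap rather than a proof.
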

\begin{proof} With $\frac12+\frac{\beta-1}{2\,\beta}+\frac1{2\,\beta}=1$, H\"older's inequality shows that
\begin{multline*}
\intg{|\nabla v|^2}=\intg{\frac{|\nabla v|^2}v\,1\,v}\\
\le\(\intg{\frac{|\nabla v|^4}{v^2}}\)^\frac12\(\intg 1\)^\frac{\beta-1}{2\,\beta}\(\intg{v^{2\,\beta}}\)^\frac1{2\,\beta}\,,
\end{multline*}
from which we deduce that
\be{Demange1}
\(\intg{\frac{|\nabla v|^4}{v^2}}\)^\frac12\ge\frac{\intg{|\nabla v|^2}}{\(\intg{u^2}\)^\frac1{2\,\beta}}
\ee
because $|\Omega|=1$. With $\frac12+\frac{\beta-1}{\beta\,(p-1)}+\frac{\beta\,(p-3)+2}{2\,\beta\,(p-1)}=1$, H\"older's inequality shows that
\begin{multline*}
\frac1{\beta^2}\intg{|\nabla(v^\beta)|^2}\\
=\intg{v^{2\,(\beta-1)}\,|\nabla v|^2}=\intg{\frac{|\nabla v|^2}v\,v^\frac{(p+1)\,(\beta-1)}{p-1}\cdot v^\frac{\beta\,(p-3)+2}{p-1}}\\
\le\(\intg{\frac{|\nabla v|^4}{v^2}}\)^\frac12\(\intg{v^{\beta\,(p+1)}}\)^\frac{\beta-1}{\beta\,(p-1)}\(\intg{v^{2\,\beta}}\)^\frac{\beta\,(p-3)+2}{2\,\beta\,(p-1)}\,,
\end{multline*}
from which we deduce that
\[
\(\intg{\frac{|\nabla v|^4}{v^2}}\)^\frac12\ge\frac1{\beta^2}\,\frac{\intg{|\nabla u|^2}}{\(\intg{u^2}\)^\frac{\beta\,(p-3)+2}{2\,\beta\,(p-1)}}\,.
\]
This inequality combined with~\eqref{Demange1} completes the proof.\end{proof}

For any $\beta>1$, we define
\[
\varphi(s):=\int_0^s\exp\left[\kappa\(\(1\,-\,(p-1)\,z\)^{1-\delta}-\(1\,-\,(p-1)\,s\)^{1-\delta}\)\right]\,dz
\]
where $\kappa=\tfrac{R}{\beta\,(\beta-1)\,(p+1)}$ and $R$ appears in Proposition~\ref{Prop:mu}, and let
\[
\Phi(s):=\big(1+(p-1)\,s\big)\,\varphi\!\(\frac s{1+(p-1)\,s}\)\,.
\]
\begin{Theo}\label{Thm:Improved} Assume that $\Omega$ is a bounded convex domain such that $|\Omega|=1$ and that one of the following conditions is satisfied:
\begin{enumerate}
\item[(i)] $d=2$ and $p\in(0,1)\cup(1,8+4\sqrt3)$,
\item[(ii)] $d\ge3$ and $p\in(0,1)\cup(1,2^*-1)$.
\end{enumerate}
With the notations of Proposition~\ref{Prop:mu}, for any $u\in\H^1(\Omega)$ be such that $\nrm u2=1$, we have the inequality
\[
\frac{1-\theta}{p-1}\,\lambda_2\;\Phi\!\(\frac{\nrm up^2-1}{p-1}\)\le\nrm{\nabla u}2^2
\]
for any $\theta\in\big(\theta_\star(p,d),1\big)$ and $\beta>1$ such that $\beta_-(\theta,p,d)<\beta<\beta_+(\theta,p,d)$.\end{Theo}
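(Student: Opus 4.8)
The plan is to run the nonlinear flow argument from Section~\ref{Sec:Nonlinear} again, but instead of merely asserting that $\frac{d}{dt}\mathcal J_\Lambda[v^\beta]\le 0$, we quantify the production term using Lemma~\ref{Lem:Demange} and turn the resulting differential inequality on a suitable quantity into the claimed improved inequality. First I would fix $\theta\in(\theta_\star(p,d),1)$ and $\beta\in(\beta_-(\theta,p,d),\beta_+(\theta,p,d))$ with $\beta>1$ (and $\beta\le 2/(3-p)$ when $p<3$, which is compatible with the admissible window under hypotheses (i)--(ii)), so that Proposition~\ref{Prop:mu} gives $R>0$ and
\[
\frac{d}{dt}\mathcal J_\Lambda[v^\beta]\le -\,R\,\beta^2\intg{\frac{|\nabla v|^4}{v^2}}
\]
for solutions of the flow~\eqref{flow}. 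Normalizing $\nrm u{p+1}=1$ (which is preserved along~\eqref{flow}), Lemma~\ref{Lem:Demange} bounds the right-hand side from below by a product $\intg{|\nabla u|^2}\intg{|\nabla v|^2}/\big(\intg{u^2}\big)^\delta$; combined with the spectral bound $\lambda_2\intg{|\nabla v|^2}\le\intg{(\Delta v)^2}$ from Lemma~\ref{Lem:LinInterp} in the form already used (i.e. $\intg{|\nabla v|^2}\ge\lambda_2\nrm{v-\bar v}2^2$ is not quite what we need — rather one keeps $\intg{|\nabla u|^2}\ge\lambda_2(1-\theta)^{-1}\cdot(\text{something})$), one gets a \emph{closed} differential inequality for the entropy-type functional.

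The key step is to identify the right dependent variable. Following~\cite{DEKL,Demange-PhD}, set $\Lambda=\lambda\,|p-1|$ and introduce the function of $t$ given (up to normalization $\nrm{v(t,\cdot)}{p+1}=1$) by the free energy
\[
\mathsf E(t):=\frac{1}{p-1}\Big(\nrm{v(t,\cdot)^\beta}2^2-1\Big)=\frac{\nrm{u(t,\cdot)}2^2-1}{p-1}\,,
\]
after switching to the normalization $\nrm u{p+1}=1$; its derivative is, up to constants, $-\intg{|\nabla v|^2}$, so that $\mathcal J_\Lambda = \intg{|\nabla u|^2}-\frac{\Lambda}{p-1}(\cdots)$ and $\frac{d}{dt}\mathsf E$ are linked. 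The point of the exponent $\delta$ in Lemma~\ref{Lem:Demange} and of the integrating factor $\varphi$ is exactly to absorb the factor $\big(\intg{u^2}\big)^{-\delta}=\big(1+(p-1)\mathsf E\big)^{-\delta}$: one checks that along the flow
\[
\frac{d}{dt}\Big(\mathcal J_\Lambda[v^\beta]\cdot(\text{integrating factor})\Big)\le 0\,,
\]
where the integrating factor is built from $\varphi$ evaluated at $\mathsf E/(1+(p-1)\mathsf E)$, i.e.\ precisely the construction of $\Phi$. Then, letting $t\to\infty$, where $v(t,\cdot)\to$ const and $\mathcal J_\Lambda$ vanishes, yields $\mathcal J_\Lambda[u_0]\cdot(\text{factor at }t=0)\ge 0$, which after unwinding the definitions of $\varphi$, $\Phi$ and the normalization is exactly
\[
\frac{1-\theta}{p-1}\,\lambda_2\;\Phi\!\Big(\frac{\nrm u{p+1}^2-1}{p-1}\Big)\le\nrm{\nabla u}2^2\,,
\]
after renormalizing to $\nrm u2=1$ by homogeneity (replacing $u$ by $u/\nrm u2$ transfers the statement to the form in the theorem). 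The constant $\lambda_2$ enters through the step where $\intg{(\Delta v)^2}\ge\lambda_2\intg{|\nabla v|^2}$ is applied to the leftover $(1-\theta)\intg{(\Delta v)^2}-\lambda|p-1|\intg{|\nabla v|^2}$ term, which is where the factor $(1-\theta)\lambda_2$ comes from; convexity of $\Omega$ is used (as in Lemma~\ref{Lem:Convex}) to discard $\intg{(\Delta v)^2}-\intg{|\mathrm{Hess}\,v|^2}\ge 0$ and the $\mathrm Q[v]$ square term.

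I expect the main obstacle to be the bookkeeping that shows the integrating factor built from $\varphi$ is the correct one, i.e.\ verifying that the ODE
\[
\mathsf E'(t)=-\,c\intg{|\nabla v|^2}\,,\qquad \frac{d}{dt}\nrm{\nabla u}2^2\le -\,R\beta^2\,\frac{\intg{|\nabla u|^2}\,\intg{|\nabla v|^2}}{(1+(p-1)\mathsf E)^\delta}
\]
integrates, via Grönwall-type manipulation with the explicit weight $\exp[\kappa((1-(p-1)z)^{1-\delta}-\cdots)]$ and $\kappa=R/(\beta(\beta-1)(p+1))$, to a monotone quantity whose limit at $t=+\infty$ is controlled. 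This is the same computation as in~\cite[proof of the improved inequality]{DEKL}, transplanted to the Neumann setting; the only genuinely new ingredients are the boundary-term nonnegativity from Lemma~\ref{Lem:Convex} and the restriction on $p$ in (i)--(ii) (for $d=2$ the upper bound $8+4\sqrt3=p^\sharp+1$-type threshold ensures $\beta$ can be chosen $>1$ inside $(\beta_-,\beta_+)$ while also $\le 2/(3-p)$). A secondary technical point is the usual density/regularity justification that the flow~\eqref{flow} has a solution smooth enough to run these integrations by parts, or alternatively the elliptic reformulation testing the Euler--Lagrange equation of $\mathcal J_\Lambda$ against the flow direction; either route is standard and I would merely cite~\cite{DEL2015,MR2459454} as in the rest of the paper.
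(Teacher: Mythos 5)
Your proposal takes essentially the same route as the paper's proof: Proposition~\ref{Prop:mu} for the flow with its remainder term, Lemma~\ref{Lem:Demange} to close a differential inequality in the entropy/Fisher-information variables $\mathsf e$, $\mathsf i$, a Gr\"onwall-type integrating factor built from $\varphi$ (the paper writes the monotone quantity as $\mathsf i\,\psi'(\mathsf e)-\Lambda\,\psi(\mathsf e)$ with $\varphi=\psi/\psi'$, which is the precise form of your ``$\mathcal J_\Lambda$ times an integrating factor'' step), and the limit $t\to+\infty$ together with the renormalization from $\nrm u{p+1}=1$ to $\nrm u2=1$ that turns $\varphi$ into $\Phi$. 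Up to that minor rephrasing, this is the paper's argument and it is correct.
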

\begin{proof} Let us define $\Lambda=\frac{1-\theta}{p-1}\,\lambda_2$
\[
\mathsf e=\frac1{p-1}\,\left[\nrm u{p+1}^2-\nrm u2^2\right]\,,\quad\mathsf i:=\nrm{\nabla u}2^2
\]
so that
\[
\mathcal J_\Lambda[u]=\mathsf i-\,\Lambda\,\mathsf e\,.
\]
Using Proposition~\ref{Prop:mu} and Lemma~\ref{Lem:Demange}, we obtain the differential inequality
\[
\mathsf i'-\,\Lambda\,\mathsf e'-\frac{R}{2\,\beta^2}\,\frac{\mathsf i\,\mathsf e'}{\(1\,-\,(p-1)\,\mathsf e\)^\delta}\le0
\]
which can be rewritten as
\[
\frac d{dt}\big(\mathsf i\,\psi'(\mathsf e)-\Lambda\,\psi(\mathsf e)\big)\le0
\]
if $\varphi$ and $\psi$ are related by
\[
\varphi(\mathsf e):=\frac{\psi(\mathsf e)}{\psi'(\mathsf e)}\,.
\]
It is then elementary to check that $\varphi$ satisfies the ODE
\[
\varphi'=1-\varphi\,\frac{\psi''(\mathsf e)}{\psi'(\mathsf e)}=1+\varphi\,\frac{R}{2\,\beta^2}\,\(1\,-\,(p-1)\,\mathsf e\)^{-\delta}
\]
and $\varphi(0)=0$. 
\end{proof}

The reader interested in the precise ranges of the exponent $\beta$ and the values of $\theta$ is invited to refer to~\cite{DEKL} for more details.

\subsection{Some concluding remarks and open questions}

Beyond the fact that we deal with a bounded domain with Neumann boundary conditions instead of a compact manifold with positive curvature, the lower estimate in Theorem~\ref{Thm:Main2} differs from the existing literature in several aspects. First of all we emphasize the fact that the convexity is needed for our method (Lemma~\ref{Lem:Convex}) but is certainly not necessary. The whole range of exponents corresponding to $2<p+1<2^*$ is covered as in~\cite{MR1412446,MR1134481,MR1631581,MR1338283} and the flow interpretation gives a nice framework, which is already present in the results of J.~Demange in~\cite{MR2381156} and has been emphasized in~\cite{Dolbeault20141338,DEKL}. Even better, the range $1<p+1<2$ is also covered, which is new in the context of bounded domains. As the problem is set on the Euclidean space, we have neither a curvature assumption nor pointwise CD($\rho$,$N$) conditions. What matters is the Poincar\'e constant, which was already taken into account in the papers of J.R.~Licois and L.~V\'eron in~\cite{MR1338283,MR1631581} and D.~Bakry and M.~Ledoux in~\cite{MR1412446} in the case of compact manifolds. However, we deal only with integral quantities and integrations by parts, as was emphasized in~\cite{Dolbeault20141338}, still in the compact manifolds case. Last but not least, the nonlinear flow approach is also based on the methods of~\cite{DEKL2012,Dolbeault20141338} for compact manifolds, but the results of Section~\ref{Sec:Improved} on the improved inequalities as the ones obtained in~\cite{DEKL} go beyond the results that have  been achieved so far by standard techniques of nonlinear elliptic equations.

\medskip By studying radial solutions to~\eqref{Eq3}, further results can be obtained using ODE techniques. For instance, if $p\in(0,1)$ and $\lambda>0$ is large enough, according to the \emph{compact support principle}, there are non-constant, radial solutions with compact support in a ball strictly contained in $\Omega$. This, in particular, provides us with an upper bound on $\Lambda_\star$. See~\cite{MR1938658,MR1387457,MR1629650,MR1715341}.

In dimension $d=1$, the computations are almost explicit. Scalings can be used so that the problem is equivalent to the case $\lambda=1$ on an interval with varying length. See for instance \cite{MR607786,MR1090827} for results in this direction.

\medskip Problems (P2) and (P3) are equivalent. An optimal function for (P2) solves~\eqref{EL}, and any solution of~\eqref{EL} is optimal as can be checked by multiplying the equation by $u$ and integrating on $\Omega$. The threshold for \emph{rigidity} in~\eqref{EL} is therefore $\lambda=\mu_2$. However, this problem is of different nature than the rigidity problem in (P1). Because all terms in~\eqref{EL} are $1$-homogenous, the normalization of $u$ in $\L^{p+1}(\Omega)$ is free and one can of course take $\nrm u{p+1}^{p-1}=\mu$ so that $u$ solves~\eqref{Eq3}. Rigidity in (P1) implies rigidity in (P2). The reverse implication is not true and, up to the multiplication by a constant, all solutions of~\eqref{EL} solve~\eqref{Eq3}, while the opposite is not true. In that sense, the set of solutions to~\eqref{EL} is larger, which explains why we only prove that $\mu_1\le\mu_2$.

\medskip As a conclusion, let us mention a few open questions. First of all a natural question would be to try to prove that $\mu_1=\mu_2$ in the statement of Theorem~\ref{Thm:Main2}: under which assumptions can this be done~? In the framework of compact manifolds, this is true in the case of the sphere, but it turns out that the lower estimate on $\mu_1$ given by the nonlinear flow method is then equal to $\Lambda_\star/|p-1|$, which is definitely a very peculiar case.

Are there cases for which $\mu_1<\mu_2$~? For more complex interpolation inequalities on a cylinder, it has been established in \cite{springerlink:10.1007/s00526-011-0394-y} that this happens and the interested reader is invited to refer to \cite{FreefemDolbeaultEsteban,0951-7715-27-3-435} for more details of qualitative nature. If $\Omega$ is a ball numerical computations when $d=2$ and $p=2$ also show that $\mu_1<\mu_2$ as long as the study is done within the radial setting, but the branch of solutions corresponding to $\mu(\lambda)<\lambda$ is generated by non-radial functions. If $\mu_1=\mu_2$, then $\mu_1$ is also a threshold value for the existence of non-constant solutions: for any $\mu>\mu_1$ such solutions indeed exist. Is this also what happens if $\mu_1<\mu_2$, or are there values of $\mu\in(\mu_1,\mu_2)$ such that all positive, or at least nonnegative, solutions are in fact constants~?
 
Branches of solutions and bifurcations have been the subject of numerous papers and we did not review the existing literature, but at least one can mention an interesting problem. We know that optimal potentials in (P3) are related with optimal functions in (P2). Is it possible to take advantage of the spectral information in Problem (P3) to get information on branches of solutions associated with (P1) ?

\par\medskip\centerline{\rule{2cm}{0.2mm}}\medskip\noindent
\copyright\,2016 by the authors. This paper may be reproduced, in its entirety, for non-commercial purposes.

\def\bysame{\leavevmode ---------\thinspace}
\makeatletter\if@francais\providecommand{\og}{<<~}\providecommand{\fg}{~>>}
\else\gdef\og{``}\gdef\fg{''}\fi\makeatother
\def\cdrandname{\&}
\providecommand\cdrnumero{no.~}
\providecommand{\cdredsname}{eds.}
\providecommand{\cdredname}{ed.}
\providecommand{\cdrchapname}{chap.}
\providecommand{\cdrmastersthesisname}{Memoir}
\providecommand{\cdrphdthesisname}{PhD Thesis}

\end{document}